\documentclass[12pt]{amsart}

\usepackage{amsaddr}
\usepackage{amsmath}
\usepackage{amssymb}
\usepackage{amscd}
\usepackage{latexsym}
\usepackage{amsthm}
\usepackage{mathrsfs}
\usepackage{color}
\usepackage{amsfonts}
\usepackage{enumitem}
\usepackage{array,tabularx}
\usepackage{caption,setspace}
\usepackage{mathtools}
\usepackage{tikz}
\usepackage{tikz-cd}
\usetikzlibrary{arrows}
\usetikzlibrary{knots}
\usepackage{hyperref}

\AtBeginDocument{%
	\def\MR#1{}
}

\usepackage{stmaryrd,graphicx}
\textheight22cm \textwidth15cm \hoffset-1.7cm \voffset-.5cm

\newtheorem{thm}{Theorem}[section]
\newtheorem*{thm*}{Theorem}

\newtheorem*{cor*}{Corollary}
\newtheorem{lem}[thm]{Lemma}
\newtheorem*{lem*}{Lemma}
\newtheorem{prop}[thm]{Proposition}
\newtheorem*{prop*}{Proposition}

\theoremstyle{definition}

\newtheorem*{defn*}{Definition}
\newtheorem{conjecture}{Conjecture}

\theoremstyle{remark}
\newtheorem{rem}{Remark}[section]
\newtheorem*{rem*}{Remark}

\newtheorem*{problem*}{Problem}


\newcommand{\QQ}{\mathbb Q}

\newcommand{\CC}{\mathbb C}

\newcommand{\ZZ}{\mathbb Z}

\newcommand{\cC}{\mathcal C}

\newcommand{\PP}{\mathbb P}

\newcommand{\E}{\mathscr E}

\newcommand{\GF}{\mathrm{GF}}

\newcommand{\GL}{\mathrm{GL}}

\newcommand{\Aut}{\mathrm{Aut}}

\renewcommand{\Im}{\mathrm{Im}\;}
\renewcommand{\Re}{\mathrm{Re}\;}

\title{Rationality proofs by curve counting}
\author{Anton Mellit}
\email{anton.mellit@univie.ac.at}
\address{Faculty of Mathematics, University of Vienna, \\
	Oskar-Morgenstern-Platz 1, 1090 Vienna, Austria}

\begin{document}
\onehalfspacing

\begin{abstract}
We propose an approach for showing rationality of an algebraic variety $X$. We try to cover $X$ by rational curves of certain type and count how many curves pass through a generic point. If the answer is $1$, then we can sometimes reduce the question of rationality of $X$ to the question of rationality of a closed subvariety of $X$. This approach is applied to the case of the so-called Ueno-Campana manifolds. Assuming certain conjectures on curve counting, we show that the previously open cases $X_{4,6}$ and $X_{5,6}$ are both rational. Our conjectures are evidenced by computer experiments. In an unexpected twist, existence of lattices $D_6$, $E_8$ and $\Lambda_{10}$ turns out to be crucial.
\end{abstract}

\maketitle

\section{Introduction} 
In November 2014 F. Catanese gave a talk at ICTP,
Trieste about Ueno-Campana varieties.  In particular he spoke about the
following open problem. Let $E$ be the elliptic curve over $\CC$ with complex
multiplication by $\frac{1+\sqrt{-3}}2$ or the curve with complex multiplication by $\sqrt{-1}$. Let $\Gamma\simeq \ZZ/c\ZZ$ be the
group of automorphisms of $E$ or its subgroup with $c\geq 3$. So we have\footnote{We include the case $c=3$ for completeness and because it helps to illustrate our techniques.} $c=3$, $c=4$ or $c=6$ and $c$ determines $E$ uniquely. Let $X_{n,c}=E^n/\Gamma$, the quotient of $E^n$ by the
diagonal action of $\Gamma$. It is well-known that $E^n/\Gamma$ is rational for $n=1,2$. Ueno first studied these varieties in \cite{ueno1975classification} and showed that $E^n/\Gamma$ cannot be rational for $n\geq c=|\Gamma|$. Campana asked (\cite{campana2011remarks})

\begin{problem*}
For which $c$, $n$ is $X_{n,c}$ rational?
\end{problem*}

An introduction to the problem and the state of the art is given in \cite{catanese2015unirationality}. In particular, unirationality of $X_{3,4}$ was proved in \cite{catanese2013unirationality}. Then rationality of $X_{3,4}$ was proved in \cite{colliot2015rationality}. Rationality of $X_{3,6}$ was proved in \cite{oguiso2013explicit}. Then \cite{catanese2015unirationality} established unirationality of $X_{4,6}$. Rationality of $X_{4,6}$ and unirationality of $X_{5,6}$ are still open.

In this paper we give evidence towards rationality of $X_{4,6}$ and $X_{5,6}$. Below we will explain certain curve counting problem. We could only solve this problem by a certain computer-based heuristic approach and our answer is not rigorously justified. So we formulate results of these computations as Conjectures \ref{conj:X46}, \ref{conj:X56 1}, \ref{conj:X56 2}.

\begin{thm*}
    If Conjecture \ref{conj:X46} is true, then $X_{4,6}$ is rational.
\end{thm*}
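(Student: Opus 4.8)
The plan is to turn the curve-counting statement into a birational fibration and then drop dimension. Suppose we have arranged, as in the setup preceding the conjecture, a family $\cC \to B$ of rational curves covering $X_{4,6}$, together with its universal evaluation map $\ev\colon \cC \to X_{4,6}$. Conjecture \ref{conj:X46} asserts that through a generic point of $X_{4,6}$ there passes exactly one member of the family; in the language of the evaluation map this says precisely that $\ev$ has degree $1$, i.e. it is birational. Consequently $X_{4,6}$ is birational to the total space $\cC$, which carries the structure of a fibration in rational curves over the parameter space $B$. Since each fibre is a rational curve, this realizes $X_{4,6}$, up to birational equivalence, as a conic-bundle-type $\PP^1$-fibration over $B$, where $\dim B = 3$.

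First I would pin down $B$ up to birational equivalence as an explicit closed subvariety $Z \subset X_{4,6}$ of dimension $3$. The natural candidate is a distinguished locus through which the curves are organized --- for instance the locus where a marked point of each curve lands, or the intersection of the generic curve with a fixed divisor. Because exactly one curve passes through a generic point, sending $x$ to the point where its unique curve meets $Z$ defines a dominant rational map $X_{4,6} \dashrightarrow Z$ whose generic fibre is the corresponding $\PP^1$. The inclusion $Z \hookrightarrow X_{4,6}$ then furnishes a rational section, since each fibre meets $Z$ in one point. A section of a $\PP^1$-fibration kills the Brauer obstruction, so over the function field $\CC(Z)$ the generic fibre becomes a conic with a rational point, hence isomorphic to $\PP^1_{\CC(Z)}$. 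Therefore $X_{4,6}$ is birational to $Z \times \PP^1$, and in particular $X_{4,6}$ is rational \emph{provided} $Z$ is rational. This is the promised reduction of rationality of $X$ to that of a closed subvariety.

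It then remains to prove that the $3$-dimensional base $Z$ is rational. Here I would try to identify $Z$ birationally with an already-understood variety --- most optimistically with $X_{3,6}$, whose rationality is known (\cite{oguiso2013explicit}), or with an explicit rational threefold obtained by an analogous lower curve count. The guiding expectation is that once the top rational curve has been stripped off, the residual data should be governed by the same diagonal $\ZZ/6$-action on one fewer factor of $E$, so that the lower-dimensional case from the literature applies as a base case.

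The main obstacle is twofold and lies entirely on the geometric side, since Conjecture \ref{conj:X46} is assumed. First, one must verify that the marked-point construction produces a genuinely \emph{single-valued} section rather than a multisection: uniqueness of the covering curve controls the fibre over a general point of $X_{4,6}$, but one still has to check that a general curve meets $Z$ in exactly one point and that this intersection varies algebraically, so that $X_{4,6} \dashrightarrow Z$ is a fibration admitting a section. Second, and more seriously, one must establish rationality of $Z$; the reduction is only useful if the base can be recognized as rational, and making the birational identification with $X_{3,6}$ (or another rational threefold) precise --- confirming it is a birational equivalence and not merely a dominant map --- is where the real work of the implication concentrates.
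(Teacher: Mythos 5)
Your skeleton --- realize $X_{4,6}$ birationally as a $\PP^1$-fibration whose section is supplied by a threefold $Z\subset X_{4,6}$ meeting each curve once, and thereby reduce to rationality of $Z$ --- is exactly part (ii) of Lemma \ref{lem: main lemma}, so the strategy is the right one. The genuine gap is that you never use the second clause of Conjecture \ref{conj:X46}, and as a result every step that actually constitutes the implication is left open. The conjecture does not only say that a generic point of $E^4$ lies on exactly one $H_{144}$-curve; it also names a specific divisor $D_{0,1,2,1}=\{p_2+2p_3+p_4=0\}$ and asserts that a generic point of \emph{it} lies on exactly one curve as well. That clause is precisely the hypothesis of Lemma \ref{lem: main lemma} concerning $Z$: it forces the choice $Z=D_{0,1,2,1}/\Gamma$ and is what makes your marked-point map $X_{4,6}\dashrightarrow Z$ (equivalently, the rational section of the fibration) well defined. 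Leaving $Z$ unspecified is not a benign omission: for most candidate divisors the analogous count is $0$, not $1$ --- for every vector of $H$-norm $6$ the curves meet the divisor only at infinity, and even in norm $12$ the ``diagonal'' vector $v=(1,0,1,0)$ gives $0$ curves --- so for those choices no section exists and the reduction collapses.

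With the correct $Z=D_{0,1,2,1}/\Gamma$, the two items you describe as ``where the real work concentrates'' are short arguments, which the paper supplies. Rationality of $Z$: solving $p_4=-p_2-2p_3$ gives a $\Gamma$-equivariant isomorphism $D_{0,1,2,1}\cong E^3$, hence $Z\cong X_{3,6}$, which is rational by \cite{oguiso2013explicit}; your optimistic guess is correct, but it is a one-line identification rather than a residual problem. Single-valuedness of the section: by the defining property of $H(\tilde C)$ one has $\tilde C\cdot D_v=v^*H_{144}v=12$ for $v=(0,1,2,1)$; every curve of the family passes through the origin of $E^4$ (the six roots of $R$ give the points of $\tilde C$ with $w=0$, all mapping to the point at infinity in each factor), and the origin lies on $D_v$ and is $\Gamma$-fixed; away from the fixed locus the intersection points of the $\Gamma$-invariant sets $\tilde C$ and $D_v$ come in orbits of size $6$, so at most $\lfloor 11/6\rfloor=1$ point of $C\cap(D_v/\Gamma)$ lies outside the fixed locus --- exactly the condition required in part (ii) of Lemma \ref{lem: main lemma}. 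Finally, a smaller slip: ``exactly one curve through a generic point'' is not literally ``$\ev$ has degree one'', since a generic point could a priori be a multiple point of its unique curve; the paper's lemma disposes of this by a dimension count (the locus of points where the parametrization identifies two parameter values has dimension at most $n-1$). None of this is difficult, but it is the entire content of the theorem, and your proposal defers all of it.
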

\begin{thm*}
    If Conjecture \ref{conj:X56 1} is true, then $X_{5,6}$ is unirational. If moreover Conjecture \ref{conj:X56 2} is true and $X_{4,6}$ is rational, then $X_{5,6}$ is rational.
\end{thm*}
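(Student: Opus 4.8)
The plan is to realize $X_{5,6}=E^5/\Gamma$ as swept out by a family of rational curves and then to apply the reduction principle described in the introduction: if through a generic point of $X_{5,6}$ there passes exactly one curve of the family, the associated incidence variety is birational to $X_{5,6}$ and fibers in $\PP^1$'s over the space of curves, so that rationality of $X_{5,6}$ is reduced to rationality of this base together with the existence of a rational section. The relevant curves descend from $\Gamma$-equivariant data on $E^5$: a $\Gamma$-equivariant homomorphism $E\to E^5$ is $\mathcal O$-linear and hence given by a vector $v$ in the rank-$5$ module $\mathcal O^5$, and its image, carrying the branch structure of the quotient $E\to E/\Gamma\cong\PP^1$, descends to a rational curve on $X_{5,6}$. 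The combinatorics of which such curves pass through a given point is exactly the lattice-point count on the rank-$10$ lattice $\Lambda_{10}$ attached to $\mathcal O^5$, and Conjectures \ref{conj:X56 1} and \ref{conj:X56 2} record, respectively, that a suitable family is covering and that a refined count equals $1$.

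For the first assertion I would establish unirationality directly, using only dominance. Assuming Conjecture \ref{conj:X56 1}, there is a family of rational curves on $X_{5,6}$ parametrized by a variety $\mathcal M$ that is rational by construction (its points record a discrete direction together with continuous moduli lying in products of copies of $\PP^1\cong E/\Gamma$), and whose universal curve $\mathcal C\to\mathcal M$ satisfies that the evaluation map $\ev\colon\mathcal C\to X_{5,6}$ is dominant. Since $\mathcal C\to\mathcal M$ is a $\PP^1$-fibration over a rational base, $\mathcal C$ is itself rational, so $\ev$ exhibits $X_{5,6}$ as the image of a rational variety. This is precisely unirationality, and it does not require the rationality of $X_{4,6}$.

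For the second assertion I would upgrade dominance to birationality. Conjecture \ref{conj:X56 2} asserts that a generic point of $X_{5,6}$ lies on exactly one curve of a distinguished subfamily, so the corresponding $\ev\colon\mathcal C\to X_{5,6}$ is birational and identifies $X_{5,6}$ birationally with a $\PP^1$-fibration over the base $B$ of that subfamily. A dimension count gives $\dim B=4$, and the key geometric input is that $B$ is birational to $X_{4,6}$: the data cutting out a curve of the subfamily is recorded by its projection under $E^5\to E^4$ modulo the residual $\Gamma$-action, so forgetting the fifth coordinate direction yields an identification $B\dashrightarrow X_{4,6}$. Granting that $X_{4,6}$ is rational, $B$ is then rational; the fixed point of $\Gamma$ carried by each curve supplies a $\Gamma$-equivariant marked point, hence a rational section of $\mathcal C\to B$, so $\mathcal C$ is birational to $\PP^1\times B$ and therefore rational. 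Consequently $X_{5,6}$ is rational.

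The main obstacle is, of course, the two counting statements, which enter only as the conjectures and are justified experimentally; granting them, the analytic heart of the argument is the identification $B\dashrightarrow X_{4,6}$ together with the verification that $\ev$ is generically one-to-one rather than merely one-to-finitely-many, i.e.\ that no hidden monodromy permutes a finite set of curves through a generic point. Producing the rational section of the resulting $\PP^1$-fibration is comparatively routine once the marked point is chosen $\Gamma$-equivariantly.
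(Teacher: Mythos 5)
Your proposal contains genuine gaps in both halves, stemming from two misreadings of what is actually available. For unirationality, you assert that the parameter space $\mathcal{M}$ of the family of curves is ``rational by construction.'' This is unjustified, and it is in fact the crux of the whole difficulty: the curves are cut out by the system $P_i^a-Q_i^b=R$ (equivalently by the main equations \eqref{eq:maineq} for the $G_{i,j}^{l,r}$), and nothing in that construction makes the base obviously rational --- if it were, Conjecture \ref{conj:X56 1} would be nearly superfluous, since mere dominance of the evaluation map would already give unirationality. The paper never establishes anything directly about the base of the family. Instead it applies part (i) of Lemma \ref{lem: main lemma} with $Z$ a divisor of the form $D_v/\Gamma$: for unimodular $v$ one has $D_v\cong E^4$, hence $Z\cong X_{4,6}$, which is unirational by the known result of \cite{catanese2015unirationality}; Conjecture \ref{conj:X56 1} guarantees that the locus of points lying on exactly one curve contains a dense open set, and density of the union of the divisors $D_v/\Gamma$ ensures that some such $Z$ meets this locus generically. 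So unirationality of $X_{5,6}$ is inherited from the (known) unirationality of $X_{4,6}$ through the curves, not from rationality of the moduli of curves.

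For rationality, you have misread Conjecture \ref{conj:X56 2}: it does not say that a generic point of $X_{5,6}$ lies on exactly one curve of a ``distinguished subfamily''; it says that a generic point of the \emph{divisor} $D_{1,0,\zeta,0,-1}\subset E^5$ lies on exactly one curve of the same family. That hypothesis is precisely what is needed to run part (ii) of Lemma \ref{lem: main lemma} with $Z=D_{1,0,\zeta,0,-1}/\Gamma\cong X_{4,6}$ (rational by assumption): curves through generic points of $Z$ are then parametrized by $Z$ itself via the tautological incidence section, and the intersection computation $D_v\cdot\tilde{C}=18$ together with the fact that at least $12$ of these intersection points lie at infinity shows that a curve meets $Z$ in at most one point away from the fixed locus, which is the remaining hypothesis of part (ii). Your substitute steps do not work as stated: identifying the base $B$ with $X_{4,6}$ by ``forgetting the fifth coordinate'' fails because the projection of a curve to $E^4/\Gamma$ is a curve, not a point, so it induces no rational map $B\dashrightarrow X_{4,6}$; and your rational section from ``the fixed point of $\Gamma$ carried by each curve'' fails because each curve $\tilde{C}$ passes through $kc=12$ points at infinity, with no algebraic way to single one out, so monodromy obstructs the choice of a marked point. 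Both the birational identification of the base with $X_{4,6}$ and the section are exactly what incidence with the divisor $Z$ --- i.e.\ Conjecture \ref{conj:X56 2} plus the norm-$18$ computation --- is there to provide.
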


As a summary of all the known results we conclude:

\begin{cor*}\label{cor:main cor}
	Suppose Conjectures \ref{conj:X46}, \ref{conj:X56 1}, \ref{conj:X56 2} are true. Let $E$ be an elliptic curve over $\CC$ and let $\Gamma$ be a 
    subgroup of the automorphism group of $E$. Let $n$ be an integer such that $0<n<|\Gamma|$. Then the
    quotient of $E^n$ by the diagonal action of $\Gamma$ is rational.
\end{cor*}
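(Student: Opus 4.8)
The plan is to turn the corollary into a short finite case analysis, since the group $\Gamma$ is forced to be one of only a handful of cyclic groups. Over $\CC$ the automorphism group of an elliptic curve fixing the origin is cyclic, of order $2$ for generic $j(E)$, of order $4$ when $j(E)=1728$, and of order $6$ when $j(E)=0$. Hence every subgroup $\Gamma\subseteq\Aut(E)$ is cyclic, and writing $c:=|\Gamma|$ the hypothesis $0<n<|\Gamma|$ forces $c\geq 2$, so $c\in\{2,3,4,6\}$. For $c\in\{3,4,6\}$ the existence of an automorphism of order $c$ determines $E$ up to isomorphism as one of the two complex-multiplication curves of the introduction, and the quotient $E^n/\Gamma$ is then precisely the variety $X_{n,c}$. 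It therefore suffices to prove rationality of $X_{n,c}$ for all pairs with $c\in\{2,3,4,6\}$ and $1\leq n\leq c-1$.

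First I would clear the cases that are independent of the conjectures. For $n=1$ the quotient $E/\Gamma$ is a smooth projective curve receiving a nonconstant map from $E$ that is ramified (the origin is fixed by $\Gamma$), so Riemann--Hurwitz forces genus $0$ and $E/\Gamma\cong\PP^1$, which is rational; this disposes of $c=2$ entirely. Rationality of $E^2/\Gamma$ is classical and is recorded in the introduction, covering $n=2$ for every remaining $c$. For the next layer I would cite the literature directly: $X_{3,4}$ is rational by \cite{colliot2015rationality} and $X_{3,6}$ is rational by \cite{oguiso2013explicit}.

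The only pairs left are $(c,n)=(6,4)$ and $(6,5)$, and here the conjectural hypotheses enter; they must be applied in the correct order. The first theorem above, under Conjecture \ref{conj:X46}, yields rationality of $X_{4,6}$. Feeding this back in, the second theorem above applies: under Conjectures \ref{conj:X56 1} and \ref{conj:X56 2} together with the rationality of $X_{4,6}$ just established, $X_{5,6}$ is rational. This exhausts all admissible pairs and completes the argument.

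I expect the only point requiring care to be bookkeeping rather than substance: one must check that each admissible triple $(E,\Gamma,n)$ genuinely reduces to one of the enumerated $X_{n,c}$ --- in particular that when $\Gamma$ is a proper subgroup of a larger automorphism group, for instance $\ZZ/3\ZZ\subset\ZZ/6\ZZ$ on the $j=0$ curve, the resulting quotient agrees with the standard $X_{n,c}$ --- and that the dependence of the $X_{5,6}$ statement on the prior rationality of $X_{4,6}$ is respected when the three conjectures are assumed simultaneously.
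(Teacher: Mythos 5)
Your proposal is correct and follows essentially the same route as the paper, which presents the corollary precisely as a summary: the finitely many admissible pairs $(c,n)$ with $c\in\{2,3,4,6\}$, $0<n<c$ are settled by the classical rationality of $E^n/\Gamma$ for $n=1,2$, the cited results for $X_{3,4}$ and $X_{3,6}$, and the paper's two conditional theorems for $X_{4,6}$ and $X_{5,6}$ (applied in that order, as you note). Your added bookkeeping --- that $\Aut(E)$ is cyclic of order $2$, $4$, or $6$, that $c\geq 3$ pins down $E$ as the corresponding CM curve, and the Riemann--Hurwitz argument for $n=1$ --- is exactly the implicit content of the paper's statement.
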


It would be interesting to try to apply our methods to some other abelian varieties or other group actions.

Hopefully, the corresponding curve counting can be achieved by some clever enumerative geometry techniques. This would turn our ``heuristic proofs'' into real proofs.

\section{The main idea}
Mori program teaches us that birational properties of varieties are very much controlled by rational curves on them. Let us try to be not too precise and make a guess, how existence of curves (or rather families of curves) would prove rationality of $X_{5,6}$ for us?  It would be a good situation if some family of rational curves $\{C_s\}_{s\in S}$ existed such that the base $S$ is rational and such that exactly one curve passes through a generic point of $X_{5,6}$. It turns out that just having the latter property is enough for establishing \emph{unirationality} of $X_{5,6}$. To see this, consider an embedding $\iota:X_{4,6}\hookrightarrow X_{5,6}$. If through a generic point of the image of $\iota$ we have exactly one curve from our family, we are done, because then the curves can be parametrized by $X_{4,6}$, so we obtain a dominant rational map $X_{4,6}\rightarrow S$ and unirationality of $X_{5,6}$ as a consequence. Now notice that the union of images of all embeddings $X_{4,6}\hookrightarrow X_{5,6}$ is Zariski dense in $X_{5,6}$, so $\iota$ with the required property exists. A more careful analysis leads to the following lemma:
\begin{lem}\label{lem: main lemma}
Let $X$ be an irreducible algebraic variety of dimension $n$ over $\CC$, and let $\cC=\{C_s\}_{s\in S}$ be an algebraic family of rational curves in $X$. Suppose for a generic point $x\in X$ there is exactly one curve from $\cC$ containing $x$. Let $Z\subset X$ be an irreducible closed subvariety of dimension $n-1$ such that for a generic point $x\in Z$ there is exactly one curve from $\cC$ containing $x$. Suppose the curves from $\cC$ are not contained in $Z$. Then the following holds:
\begin{enumerate}
\item If $Z$ is unirational, then $X$ is unirational.
\item If moreover $Z$ is rational and there exists open $V\subset Z$ such that any curve from $\cC$ intersects $V$ in no more than one point, then $X$ is rational.
\end{enumerate}
\end{lem}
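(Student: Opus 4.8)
The plan is to reformulate everything through the incidence variety (universal curve)
$I = \{(s,x)\in S\times X : x\in C_s\}$, with its two projections $p\colon I\to S$ and $q\colon I\to X$. First I would observe that the hypothesis ``exactly one curve through a generic point of $X$'' says precisely that $q$ is dominant and generically injective, hence birational; in particular $\dim I = n$, and since the fibers of $p$ are the curves $C_s$ (one-dimensional), after replacing $S$ by the image $\overline{p(I)}$ we get $\dim S = n-1 = \dim Z$. The central construction is then the rational map $\phi\colon Z\dashrightarrow S$ sending a generic $z\in Z$ to the parameter of the unique curve through it. I would show $\phi$ is dominant and generically finite by a dimension count: for generic $s$ the fiber $\phi^{-1}(s)$ lies in $C_s\cap Z$, which is finite because $C_s$ is an irreducible curve not contained in $Z$; thus $\dim\overline{\phi(Z)} = \dim Z = \dim S$, and as $S$ is irreducible, $\phi$ is dominant. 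Tautologically $\phi$ comes with a section $\sigma\colon Z\dashrightarrow I$, $z\mapsto(\phi(z),z)$, with $p\circ\sigma = \phi$ and $q\circ\sigma$ the inclusion $Z\hookrightarrow X$.

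For (1), I would base change the fibration $p$ along $\phi$: the fiber product $I\times_S Z$ maps to $Z$ with the same rational-curve fibers and now carries a genuine section induced by $\sigma$. Hence its generic fiber is a geometrically rational curve over $\CC(Z)$ with a rational point, so — after relative normalization — it becomes $\PP^1_{\CC(Z)}$, and $I\times_S Z$ is birational to $Z\times\PP^1$. Since $\phi$ is dominant, the projection $I\times_S Z\to I$ is dominant, and $I$ is birational to $X$. Therefore $X$ is dominated by $Z\times\PP^1$; if $Z$ is unirational then so is $Z\times\PP^1$, and hence so is $X$.

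For (2), the role of the open set $V$ is to upgrade $\phi$ from generically finite to birational. Restricting to $V$, injectivity is immediate: if $\phi(z_1)=\phi(z_2)=s$ with $z_1,z_2\in V$, then both lie in $C_s\cap V$, which has at most one point by hypothesis, so $z_1=z_2$; and $\phi|_V$ is still dominant since $V$ is dense in $Z$ and $\phi$ has finite fibers. Thus $\phi|_V\colon V\dashrightarrow S$ is birational, so $S$ is rational (being birational to the open subset $V$ of the rational variety $Z$), and transporting $\sigma$ through $(\phi|_V)^{-1}$ yields a rational section of $p\colon I\to S$ over all of $S$. The generic fiber of $p$ is then $\PP^1_{\CC(S)}$, whence $\CC(I)=\CC(S)(t)$ is purely transcendental over the rational field $\CC(S)$, so $X$, being birational to $I$, is rational.

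I expect the main obstacle to be the step identifying the generic fiber with $\PP^1$: one must ensure the tautological section lands in the smooth locus of the (possibly singular) rational curves for generic parameter, so that after passing to the relative normalization it genuinely produces a smooth rational point and the genus-zero curve with a rational point becomes $\PP^1$. Secondarily, the dominance and generic finiteness of $\phi$ hinge essentially on the two standing hypotheses — that the curves are not contained in $Z$ and that $Z$ is a divisor — which together force the intersections $C_s\cap Z$ to be nonempty and finite.
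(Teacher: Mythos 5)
Your proposal follows essentially the same route as the paper's proof: your incidence variety $I$ plays the role of the paper's total space $\cC$, your $\phi$ is the paper's map $s$ restricted to $U\cap Z$, your tautological section $\sigma$ is the paper's ``natural section'', and both arguments finish by birationally trivializing the pulled-back genus-zero fibration over $Z$ and mapping $Z\times\PP^1$ dominantly (resp.\ birationally) to $X$. Two divergences deserve comment. First, your dominance argument for $\phi$ invokes irreducibility of $S$ and $I$, which is not among the hypotheses; a priori the curves through points of $Z$ could be parametrized by a different irreducible component of $S$ than the curves through generic points of $X$, and then $\phi$ would dominate the wrong component. The paper avoids this: the closure of the image of $W\times\PP^1\to X$ is irreducible, contains $Z$, and is not contained in $Z$, hence has dimension $n$; this containment argument (rather than a dimension count on $S$) is what you should use, and it also shows that the component hit by $\phi$ is the one dominating $X$. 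Second, in part (2) you upgrade $\phi|_V$ to a birational map and descend rationality through $S$, while the paper checks directly that $W\times\PP^1\to X$ is generically injective; these are equivalent, and your version is arguably cleaner.

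The obstacle you flag at the end is not a loose end you failed to tie up: it is a genuine gap, and it is exactly the same gap as in the paper, whose proof simply asserts that the pulled-back family ``has a natural section'' and is ``therefore'' generically trivial. Nothing in the hypotheses prevents a generic point of $Z$ from being a node of the unique curve through it; in that case the tautological section lands in the singular locus of the fibers (equivalently, becomes two-valued on the normalized family), and the generic fiber need not acquire a rational point after normalization. Moreover, this cannot be repaired without strengthening the hypotheses: take any non-rational threefold conic bundle $Y\to\PP^2$ (these exist, e.g.\ by Artin--Mumford or Sarkisov); it admits no rational section, since a conic bundle over a rational base with a rational section is rational, so a generic line meets the generic fiber in an irreducible degree-two point, whose closure is a $2$-section $T\subset Y$. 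Pinching $Y$ along the finite degree-two map $T\to\PP^2$ (a Ferrand pushout, identifying the two points of $T$ in each fiber) yields a variety $X$ birational to $Y$, swept by nodal rational curves (the images of the conics), with exactly one curve through a generic point of $X$ and through every point of the pinch locus $Z\cong\PP^2$, and each curve meets $Z$ exactly in its node. All hypotheses of part (ii) hold with $V=Z$, yet $X$ is not rational. So the lemma as literally stated is false, and the correct statement must additionally require that a generic point of $Z$ have exactly one preimage in the total space of the family. Fortunately, this stronger condition is what the paper's computations actually verify: a solution of the system \eqref{eq:maineq} is a curve together with a marked preimage of $p$ at $(u:v)=(1:0)$, so a curve nodal at $p$ would be counted twice.
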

\begin{proof}
Denote the total space of the family of curves also by $\cC$. It comes with maps $\pi:\cC\to S$ and $f:\cC\to X$. Let $L$ be the locus of points $x\in X$ such that there is exactly one curve from $\cC$ containing $x$. This is a constructible algebraic subset of $X$. By the assumptions, $\dim(X\setminus L)\leq n-1$. Therefore $\dim(\overline{(X\setminus L)}\setminus (X\setminus L))\leq n-2$. Let $U=X\setminus\overline{(X\setminus L)}$. Since $\dim(Z\cap L)=n-1$, we also have $\dim(Z\cap U)=n-1$. Let $s:U\to S$ be the algebraic map which sends a point $x\in U$ to the unique $s(x)\in S$ such that $x\in C_{s(x)}$. The pullback $s^* \cC$ of the original family of curves to $U\cap Z$ has a natural section: for any $x\in U\cap Z$ the curve $C_{s(x)}$ contains $x$. Therefore over a non-empty open subset $W\subset U\cap Z$ this family is trivial. We obtain a map
\[
f':W\times \PP^1 \subset s^*\cC \to \cC \to X.
\]

If $Z$ is unirational, then $W$ is unirational. Hence $W\times \PP^1$ is unirational. The image of $f'$ is irreducible and contains $W$. Thus it is either contained in $\overline{W}=Z$, or has dimension $n$. The former is not possible because curves from $\cC$ are not contained in $Z$. Thus the image of $f'$ has dimension $n$. Therefore $f'$ is dominant and $X$ is unirational. The first statement has been proved.

To prove the second statement, we assume without loss of generality that $W\subset V$. If $Z$ is rational, then $W$, and hence also $W\times \PP^1$ is rational. So it is enough to show that a generic point of $X$ has not more than $1$ preimage under $f'$. Suppose $x\in U$ has at least $2$ preimages. This means there are $(v_1, t_1), (v_2, t_2) \in W\times \PP^1$ that go to $x$. Since there is exactly one curve from $\cC$ passing through $x$, and that curve can intersect $W$ in at most one point, we obtain $v_1=v_2$. On the other hand, for each $v\in W$ there is at most finitely many values of $t$ such that there exist $t'$ such that $f'(v, t)=f'(v,t')$. So the dimension of such pairs $(v,t)$ is at most $n-1$, and therefore the dimension of the space of such $x$ is also at most $n-1$. So a generic $x$ has no more than $1$ preimage.
\end{proof}

Although the proof of Lemma \ref{lem: main lemma} is essentially trivial, we see that proving unirationality/rationality of $X$ is reduced to unirationality/rationality of $Z$, and a purely curve counting question. 

A similar idea appeared in \cite{ionescu2003rationality}, where the authors show that existence of a unique \emph{quasi-line} passing through \emph{two} general points implies rationality.
\subsection{Counting curves on a computer}
The families of curves we will be dealing with are such that one can write down explicitly a system of equations whose solutions correspond to curves passing through a given point. So we can implement the following strategy. Pick a big prime number, for instance $p=1000003$ or $p=1000033$. We will work over $F=\GF(p)$. Generate a random point $x\in X(F)$. Compute the number of curves passing through $x$ by counting solutions over $\bar F$ of the corresponding system of equations by the standard Gr\"obner basis techniques\footnote{In our computations we used SAGE (\cite{sagemath}), which delegates Gr\"obner basis computations to Singular (\cite{singular}) and certain lattice algorithms to GAP (\cite{GAP4})}. If this number is $k$, $p$ is large and $x$ is ``sufficiently random'', then we expect $x$ to behave like a generic point, so the number of curves for a generic point over the complex numbers should also be $k$.

\section{Rational curves}
There are exactly $3$ pairs $E, \Gamma$ where $E$ is an elliptic curve over $\CC$ and $\Gamma$ is a subgroup of the group of automorphisms of $E$ with $|\Gamma|>2$. Consider an elliptic curve $E$ of the form $x^2-y^3=z^6$ in $\PP(3,2,1)$ or $x^2-y^4=z^4$ in $\PP(2,1,1)$ or $x^3-y^3=z^3$ in $\PP(1,1,1)=\PP^2$. The equation of the curve in all cases is $x^a-y^b=z^c$ in $\PP(\tfrac{c}{a}, \tfrac{c}{b}, 1)$. We choose $(1,1,0)$ as the zero point on $E$. There are $\gcd(a,b)$ points with $z=0$, which we call ``points at infinity''. Let $\zeta$ be a primitive root of unity of order $c$. The group $\Gamma$ of the roots of unity of order $c$ acts on $E$ by 
\[
\zeta (x,y,z)=(x, y, \zeta z).
\]

We construct rational curves in $E^n/\Gamma$ as follows. Let $k\geq 1$ be an integer, and let $R(t,u)$ be a homogeneous polynomial of degree $ck$. For each $i=1,2,\ldots,n$ let $P_i(u,v)$, $Q_i(u,v)$ be relatively prime homogeneous polynomials of degrees $\tfrac{kc}{a}, \tfrac{kc}{b}$ respectively satisfying
\begin{equation}\label{eq:PQR}
P_i^a - Q_i^b = R.
\end{equation}
Let $\tilde{C}$ be the curve given by equation $R(u,v)=w^{c}$ in $\PP(1,1,k)$. The group $\Gamma$ acts on $\tilde{C}$ by $\zeta(u,v,c)=(u,v,\zeta c)$ and $\tilde{C}/\Gamma=\PP^1$. For each $i$ we have a $\Gamma$-equivariant map $f:\tilde{C}\to E$ by
\[
(u,v, w) \to (P_i(u,v), Q_i(u,v), w).
\]
Quotienting out by $\Gamma$ we obtain a commutative diagram
\[
\begin{tikzcd}
\tilde{C} \arrow{r}{f} \arrow{d} & E^n \arrow{d}\\
C\cong \PP^1 \arrow{r} & E^n/\Gamma.
\end{tikzcd}
\]

\subsection{Discrete invariants}\label{sec:discrete invs}
To every such curve we associate \emph{discrete invariants} as follows. For each $i\neq j$ we have
\[
P_i^a - Q_i^b = P_j^a - Q_j^b.
\]
Thus we have
\[
\prod_{l=0}^{a-1} (P_i-\zeta^{\frac{lc}{a}} P_j) = \prod_{r=0}^{b-1} (Q_i-\zeta^{\frac{rc}{b}} Q_j).
\]
Denote
\[
G_{i,j}^{l,r} = \gcd(P_i-\zeta^{\frac{lc}{a}} P_j, Q_i-\zeta^{\frac{rc}{b}} Q_j),\quad M_{i,j}^{l,r} = \deg G_{i,j}^{l,r} \qquad (0\leq l<a,\,0\leq r<b).
\]
Using the assumption that $P_i$ and $Q_i$ are relatively prime and considering contribution of an arbitrary linear form in $u$, $v$ to various $M_{i,j}^{l,r}$ we establish the following:
\[
\frac{kc}{a} = \deg (P_i-\zeta^{\frac{lc}{a}} P_j) = \sum_{r=0}^{b-1} M_{i,j}^{l,r},
\]
\[
\frac{kc}{b} = \deg (Q_i-\zeta^{\frac{rc}{a}} Q_j) = \sum_{l=0}^{a-1} M_{i,j}^{l,r}.
\]
Note that $\gcd(G_{i,j}^{l,r},G_{i,j}^{l',r'})=1$ whenever $l\neq l'$ and $r\neq r'$ because otherwise all the $4$ polynomials $P_i, P_j, Q_i, Q_j$ have a common divisor.

\subsection{Cohomology classes} It is useful to match the discrete invariants $M$ to the homology classes of the strict pullbacks of our curves in $H_2(\tilde{E^n/\Gamma},\ZZ)$, where $\tilde{E^n/\Gamma}$ is the blowup of $\E^n/\Gamma$ in the fixed points of $\Gamma$. It is possible to describe this homology group explicitly, but we will not do this. Instead we will think of the homology class of a rational curve as above consisting of two pieces of data:
\begin{enumerate}
\item The homology class of $\tilde{C}$ in $H_2(E^n,\ZZ)$.
\item For each $\Gamma$-fixed point $x\in E^n$ the intersection number of the strict pullback of $\tilde{C}$ to the blowup of $E_n$ in $x$ with the exceptional divisor. This, roughly speaking, counts how may points on $\tilde{C}$ go to $x$.
\end{enumerate}
Furthermore, the homology class of $\tilde{C}$ in $H_2(E^n,\ZZ)$ can be specified by the following data.
\begin{prop}
For each curve $\tilde{C} \subset E^n$ there exists a unique $n\times n$ Hermitian matrix $H(\tilde{C})$ with entries in $\QQ[\zeta]$ such that for any vector $v\in \ZZ[\zeta]^n$ we have
\[
v^* H(\tilde{C}) v = D_v\cdot \tilde{C},
\]
where $D_v$ is the divisor class given by the pullback of $0\in E$ to $E^n$ via the map $\pi_v:E^n\to E$ given by $(x_1,x_2,\ldots,x_n) \to \sum_i {v_i x_i}$, and $v^*$ denotes the conjugate transpose of $v$.
\end{prop}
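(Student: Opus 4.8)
Everything depends on $\tilde C$ only through its class $[\tilde C]\in H_2(E^n,\ZZ)$, since $D_v\cdot\tilde C=\langle c_1(D_v),[\tilde C]\rangle$. I would set $q(v):=D_v\cdot\tilde C$, an integer-valued function on $\ZZ[\zeta]^n$, and aim to produce a unique Hermitian $M=H(\tilde C)\in M_n(\QQ[\zeta])$ with $v^{*}Mv=q(v)$ for all $v\in\ZZ[\zeta]^n$. Uniqueness I would dispatch first, as it is elementary: if $N$ is Hermitian with $v^{*}Nv=0$ for all $v\in\ZZ[\zeta]^n$, then by polarization the symmetric $\RR$-bilinear form $\beta_N(v,w)=\Re(v^{*}Nw)$ vanishes on $\ZZ[\zeta]^n\times\ZZ[\zeta]^n$; since $\ZZ[\zeta]^n$ is a full lattice in $\CC^n$ and $\beta_N$ is $\RR$-bilinear, $\beta_N\equiv0$, and as $v^{*}Nw=\beta_N(v,w)+\I\,\beta_N(\I v,w)$ this forces $N=0$.

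For existence I would feed in two structural facts about the abelian variety $E^n$. First, the divisor $\{0\}$ on $E$ is symmetric and $v\mapsto\pi_v$ is an additive map $\ZZ[\zeta]^n\to\Hom(E^n,E)$, so by the theorem of the cube the assignment $f\mapsto f^{*}\{0\}$ is quadratic; hence $q$ is an integer-valued quadratic form, with associated symmetric biadditive form $\beta$ determined by $\beta(v,v)=q(v)$ and taking values in $\tfrac12\ZZ$. (Equivalently, via Appell--Humbert $c_1(D_v)=\Im H_v$ with $H_v=\kappa\,vv^{*}$, whose pairing with $[\tilde C]$ is manifestly real-quadratic in $v$.) Second, since $[\zeta]\colon E\to E$ fixes $0$ we have $[\zeta]^{*}\{0\}=\{0\}$, and $\pi_{\zeta v}=[\zeta]\circ\pi_v$, so $q(\zeta v)=q(v)$ and therefore $\beta(\zeta v,\zeta w)=\beta(v,w)$. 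After extending $\beta$ $\RR$-bilinearly to $\CC^n$ (using $\QQ[\zeta]\otimes_\QQ\RR\cong\CC$), this invariance is preserved.

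The conceptual heart is then a short representation-theoretic lemma: a symmetric $\RR$-bilinear form $\beta$ on $\CC^n$ that is invariant under multiplication by $\zeta$ equals $\Re(v^{*}Mw)$ for a unique Hermitian $M$. I would prove it by decomposing the space of symmetric $\RR$-bilinear forms under the circle group acting by scalar multiplication: it splits into pieces of weight $0,+2,-2$, and the weight-$0$ piece is exactly $\{\Re(v^{*}Mw):M\text{ Hermitian}\}$. Multiplication by $\zeta$ scales the weight-$(\pm2)$ pieces by $e^{\pm4\pi\I/c}\neq1$ (valid since $\zeta\notin\RR$, in particular for $c\in\{3,4,6\}$), so $\zeta$-invariance annihilates the non-Hermitian part and leaves $\beta$ Hermitian; setting $q(v)=\beta(v,v)=v^{*}Mv$ gives the identity. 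I expect this interplay of quadraticity, $\zeta$-invariance, and the weight decomposition to be the main point to get right, together with pinning down the Appell--Humbert normalization $\kappa$ (which, however, does not affect the final statement); the complex-multiplication hypothesis $c\in\{3,4,6\}$ enters in that $\QQ(\zeta)$ is imaginary quadratic, so that ``$\QQ[\zeta]$-Hermitian matrix'' is the correct object.

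It remains to check $M\in M_n(\QQ[\zeta])$. The diagonal entries are $M_{ii}=q(e_i)\in\ZZ$. For the off-diagonal entries I would apply polarization for the integer form $q$ to the pairs $(e_i,e_j)$ and $(e_i,\zeta e_j)$, both in $\ZZ[\zeta]^n$, obtaining $\Re(M_{ij})=\beta(e_i,e_j)\in\tfrac12\ZZ$ and $\Re(\zeta M_{ij})=\beta(e_i,\zeta e_j)\in\tfrac12\ZZ$. Since $\Re\zeta$ and $\Re\zeta^2$ are rational for $c\in\{3,4,6\}$, the map $z\mapsto(\Re z,\Re(\zeta z))$ carries $\QQ[\zeta]$ isomorphically onto $\QQ^2$; as the two values above are rational, this pins down $M_{ij}\in\QQ[\zeta]$, completing the proof.
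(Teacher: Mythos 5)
Your proof is correct, and its skeleton matches the paper's --- quadraticity of $v\mapsto D_v\cdot\tilde C$, polarization to a symmetric bilinear form, and $D_{\zeta v}=D_v$ forcing $\zeta$-invariance, which is what makes the form Hermitian --- but the linear algebra is executed by a genuinely different mechanism. The paper never leaves $\QQ$: it polarizes to a $\QQ$-valued symmetric bilinear form $B$ on $\QQ[\zeta]^n$, defines $H$ as the unique $\QQ$-linear endomorphism with $B(v,v')=\Re(v^* H v')$, and then the one-line computation $\Re(v^* H\zeta v')=B(v,\zeta v')=B(\bar\zeta v,v')=\Re(v^*\zeta H v')$ shows that $H$ commutes with multiplication by $\zeta$, i.e.\ is $\QQ[\zeta]$-linear; thus $H$ is a matrix over $\QQ[\zeta]$ automatically, it is Hermitian because $B$ is symmetric, and both uniqueness and rationality of the entries are built into the construction. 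You instead extend the polarized form $\RR$-bilinearly to $\CC^n$ and run a weight decomposition of symmetric $\RR$-bilinear forms under the circle group (weights $0,\pm2$, the weight-$0$ piece being the real parts of Hermitian forms), using $\zeta^{\pm2}\neq1$ to annihilate the weight-$(\pm2)$ (complex-symmetric) part. That is valid, but it only produces a Hermitian matrix with \emph{complex} entries, which is why you need your final paragraph descending the entries to $\QQ[\zeta]$ via $z\mapsto(\Re z,\Re(\zeta z))$, and a separate lattice-density argument for uniqueness --- both of which the paper's $\QQ$-linear setup gets for free. What your route buys in exchange is a cleaner isolation of exactly where $c\geq3$ enters (the weight-$(\pm2)$ pieces are scaled by $e^{\pm4\pi\I/c}\neq1$), an explicit justification of the quadraticity the paper only cites as well known (theorem of the cube), and a lemma that applies to any $\zeta$-invariant symmetric form rather than only to those arising from curves.
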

\begin{proof}
It is well-known that the function $v\to D_v\cdot \tilde{C}$ is quadratic in $v$. Thus there exists a unique symmetric $\QQ$-bilinear form $B:\QQ[\zeta]^n \times \QQ[\zeta]^n \to \QQ$ such that $B(v, v)= D_v\cdot \tilde{C}$ for all $v$. But we have $D_{\zeta v} = D_{v}$. This implies $B(\zeta v, \zeta v) = B(v,v)$, hence $B(\zeta v, \zeta v')=B(v,v')$ for any pair of vectors $v,v'$. Let $H(\tilde{C}):\QQ[\zeta]^n\to \QQ[\zeta]^n$ be the unique $\QQ$-linear map such that
\[
B(v,v') = \Re(v^* H(\tilde{C}) v') \qquad\text{for all pairs $v, v'\in \QQ[\zeta]^n$.} 
\]
We have
\[
\Re(v^* H(\tilde{C})\zeta v') = B(v,\zeta v') = B(\overline{\zeta} v,v') = \Re(v^* \zeta H(\tilde{C}) v').
\]
Since this holds for all $v,v'$ the map $H(\tilde{C})$ must be $\QQ[\zeta]$-linear. So it can be represented by a matrix with entries in $\QQ[\zeta]$, and that matrix must be Hermitian because the form $B$ was symmetric.
\end{proof}

It is clear that the diagonal entries of $H(\tilde{C})$ are simply the degrees of the components $f_i$ of $f$, $f_i:\tilde{C}\to E$. Let us calculate the degree of these components for our construction. Consider the function
\[
\frac{x}{z^{\frac{c}{a}}}.
\]
This is a rational function of degree $b$ on $E$ because for a generic $t\in\CC$ there are exactly $b$ solutions to $\frac{x}{z^{\frac{c}{a}}}=t$ corresponding to the $b$-th roots of $t^a-1$. Its pullback to $\tilde{C}$ is the function
\[
\frac{P_i(u,v)}{w^{c/a}}.
\]
Now the equation $\frac{P_i(u,v)}{w^{c/a}}=t$ has $ck\cdot \frac{c}{a}$ solutions: $ck$ values of $u/v$ obtained by solving $P_i(u,v)^a=t R(u,v)$, and $c/a$ values of $w/v^k$ for each of these. Thus the degree of $f_i$ is
\[
\frac{ck \cdot \frac{c}{a}}{b} = \frac{k c^2}{ab}.
\]
A recipe to calculate the off-diagonal entries from the matrices $M_{i,j}$ will be given in the next section on a case-by-case basis.

\subsection{Calculating $k$}
Finally, we calculate the value of $k$ as a function of $n$ for which we expect to have finite number of our curves passing through a generic point of $E^n$. The first coefficient of $R(u,v)$ can be normalized to $1$, and we  have $kc$ remaining coefficients. A generic point is given by pairs $x_i, y_i$ satisfying $x_i^a-y_i^b=1$, and we can parametrize our curve so that the point $(u,v,w)=(1,0,1)$ goes to $(x_i, y_i, 1)$. This fixes the first coefficient of $P_i$ and $Q_i$. Then the condition for a polynomial $R$ to be of the form $P^a-Q^b$ is of codimension $k(c-\frac{c}{a}-\frac{c}{b})$. Thus the expected dimension of the space of solutions is $kc-n k(c-\frac{c}{a}-\frac{c}{b})$. We want this number to be equal to $2$ because there is a $2$-dimensional group of translations an rotations acting on solutions that needs to be gauged out. Thus we have
\[
2 = kc-n k (c-\frac{c}{a}-\frac{c}{b}).
\]
Note that we have $\frac{1}{a}+\frac{1}{b}+\frac{1}{c}=1$ in all the three cases, so we obtain
\[
k = \frac{2}{c-n}.
\]

\subsection{Summary of the approach}
We summarize our strategy for proving rationality of varieties of the form $E^n/\Gamma$ corresponding to triples $(a,b,c)=(3,3,3)$, $(a,b,c)=(2,4,4)$, $(a,b,c)=(2,3,6)$ and $n<c$. 
\begin{itemize}
\item Calculate $k=\frac{2}{c-n}$. Suppose it is an integer\footnote{The only cases with $n>1$ when this number is not an integer are $(a,b,c)=(2,3,6)$ with $n=2,3$. In these cases the method can still be applied. The curve $\tilde{C}$ should pass through $\Gamma$-fixed points of orders different from $6$, which implies a slightly different general shape of the equations \eqref{eq:PQR}. We do not include these situations here because it would complicate the notations, and because these cases are already known to be rational anyway.}. 
\item List possible $a\times b$ matrices $M$ and figure out which matrices correspond to which off-diagonal values of $H$. Obtain a list of possible off-diagonal entries $\mathbf{h}=\{h_1,h_2,\ldots, h_m\}$.
\item List possible $n\times n$ matrices $H$ up to integral change of basis which are positive-definite, have $\frac{kc^2}{ab}$ on the diagonal, and have only off-diagonal entries from the list $h_1,h_2,\ldots, h_m$.
\item For each $n\times n$ matrix $H$ list the degrees $M_{i,j}^{l,r}$.
\item For a point $p=(p_1,p_2,\ldots,p_n)\in E^n$, $p_i=(x_i,y_i,1)$ try to compute how many curves with discrete invariants $M_{i,j}^{l,r}$ pass through $p$. A curve is determined by a sequence of homogeneous polynomials $G_{i,j}^{l,r}(u,v)$ with first coefficient $1$ of degrees $M_{i,j}^{l,r}$. These polynomials must satisfy $\gcd(G_{i,j}^{l,r},G_{i,j}^{l',r'})=1$ whenever $l\neq l'$ and $r\neq r'$, and the equations obtained by elimination of $P_1,\ldots, P_n$ and $Q_1,\ldots,Q_n$ from the following ($i,j=1,\ldots,n$, $l=0,\ldots,a-1$, $r=0,\ldots,b-1$) system of \emph{main equations}:
\begin{equation}\label{eq:maineq}
P_i-\zeta^{\frac{lc}{a}} P_j = (x_i-\zeta^{\frac{lc}{a}} x_j) \prod_{r=0}^{b-1} G_{i,j}^{l,r},\quad Q_i-\zeta^{\frac{rc}{b}} Q_j = (y_i-\zeta^{\frac{rc}{b}} y_j) \prod_{l=0}^{a-1} G_{i,j}^{l,r}.
\end{equation}
\item If we are lucky and the answer to the previous step is $1$ for a generic point $p$, then try to construct a vector $v\in\ZZ[\zeta]^n$ such that for a generic point $p\in D_v$ the number of curves is also one, and the number of intersection points of $D_v/\Gamma\cap C$ outside the set of fixed points of $\Gamma$ is at most $1$.
\end{itemize}

\section{Example for $(a,b,c)=(3,3,3)$.}
In this case the group $\Gamma$ has order $c=3$, so we have only one case $n=2$, $k=2$. The discrete invariant has the form of a matrix
\[
M = \begin{pmatrix} M^{0,0} & M^{0,1} & M^{0,2}\\
 M^{1,0} & M^{1,1} & M^{1,2}\\
  M^{2,0} & M^{2,1} & M^{2,2}\end{pmatrix}
\]
of non-negative integers with all the row and column sums equal $2$. To calculate the $2\times 2$ matrix $H(\tilde{C})$ we already know that the diagonal entries are $2$. Let 
\[
H = \begin{pmatrix} 2 & h_{1,2}\\
 \overline{h_{1,2}} & 2\end{pmatrix}
\]
One can relate $h_{1,2}$ to $M$ by the following. Let $\Delta\subset E\times E$ be the diagonal. Then we have
\begin{prop}
\[
\Delta \cdot \tilde{C} = 2 + M^{0,0} + M^{1,2} + M^{2,1}.
\]
\end{prop}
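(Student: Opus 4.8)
The plan is to realize $\Delta\cdot\tilde C$ as a coincidence count. Parametrize $\tilde C$ by the curve $R(u,v)=w^3$ via $(u,v,w)\mapsto(f_1,f_2)$, where $f_i(u,v,w)=(P_i:Q_i:w)\in E$. This parametrization is generically injective onto $\tilde C$ (one checks $\deg f=1$ by intersecting with $\{0\}\times E+E\times\{0\}$, which gives $4$ on both sides), so $\Delta\cdot\tilde C$ equals the number of parameter points mapping into $\Delta$, counted with local intersection multiplicity. Since $\Delta=\{(P,P)\}$, such a point is one where the two points $f_1=f_2$ of $E$ coincide. I would split these coincidences into the finite ones, where $w\neq0$, and those at infinity, where $w=0$.

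For the finite coincidences, equality $(P_1:Q_1:w)=(P_2:Q_2:w)$ with $w\neq0$ forces the projective scaling factor to be $1$, hence $P_1=P_2$ and $Q_1=Q_2$; equivalently $(u:v)$ is a root of $G^{0,0}=\gcd(P_1-P_2,Q_1-Q_2)$, of which there are $M^{0,0}$. At such a root one has $R=P_1^3-Q_1^3\neq0$ generically, so there are three admissible values of $w$, yielding $3M^{0,0}$ distinct image points on $\Delta$, each of which I expect to be a transverse intersection. Note the other diagonal coincidences $G^{l,l}$ with $l\neq0$ do \emph{not} give points of $\Delta$: there $(P_1:Q_1:w)=(\zeta^l P_2:\zeta^l Q_2:w)=(P_2:Q_2:\zeta^{-l}w)\neq f_2$.

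For the coincidences at infinity I would introduce the degree-$4$ form $W=P_1Q_2-P_2Q_1$, whose vanishing records equality of the projections $(P_1:Q_1)=(P_2:Q_2)$. First, reducing modulo $G^{l,l}$ gives $P_1\equiv\zeta^l P_2$ and $Q_1\equiv\zeta^l Q_2$, so $G^{l,l}\mid W$ for each $l$; as the $G^{l,l}$ are pairwise coprime, $\prod_{l=0}^{2}G^{l,l}\mid W$, and $W=\bigl(\prod_{l=0}^{2}G^{l,l}\bigr)\,W'$ with $\deg W'=4-\Tr M$, where $\Tr M=M^{0,0}+M^{1,1}+M^{2,2}$. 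Second, at any root of $W'$ the common ratio $\mu=P_1/P_2=Q_1/Q_2$ is not a cube root of unity (otherwise the point would lie on some $G^{l,l}$), and then $R=P_1^3-Q_1^3=\mu^3 R$ forces $R=0$. Hence $W'\mid R$, so every root of $W'$ is a point at infinity at which $f_1$ and $f_2$ hit the \emph{same} point of $E$; conversely the roots of $R$ with non-matching labels are excluded since there $W\neq0$. This identifies the infinity coincidences with the $4-\Tr M$ roots of $W'$.

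Adding the two contributions gives $\Delta\cdot\tilde C=3M^{0,0}+(4-\Tr M)$, and the relations $M^{1,1}+M^{1,2}+M^{2,1}+M^{2,2}=2+M^{0,0}$ (a direct consequence of all row and column sums of $M$ being $2$) convert this into the asserted $2+M^{0,0}+M^{1,2}+M^{2,1}$. The main obstacle is the multiplicity bookkeeping, and specifically the behaviour at the points at infinity: there the parametrizing curve is \emph{totally} ramified over $\PP^1$ (since $w=0$ is a triple point of $R=w^3$), so the delicate step is to show that $\tilde C$ nevertheless meets $\Delta$ transversally at each such point, i.e.\ that each root of $W'$ contributes exactly $1$ to the intersection number. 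I would also need to handle the non-generic configurations (a root of $G^{0,0}$ with $R=0$, multiple roots of $W'$, or tangencies between $f_1$ and $f_2$), most cleanly by invoking that $\Delta\cdot\tilde C$ is a deformation invariant and specializing from the generic curve with the given discrete invariants.
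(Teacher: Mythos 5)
Your generic-case computation is essentially correct, and it is a genuinely different route from the paper's: you evaluate $\Delta\cdot\tilde C$ head-on as $3M^{0,0}+(4-\Tr M)$ via the auxiliary form $W=P_1Q_2-P_2Q_1$, and your conversion to the stated formula using the row/column-sum identity is right. But the proof has a genuine gap, located exactly where you flag it. The lesser issue is that your exact-multiplicity claims are only asserted: transversality at the $w=0$ points \emph{can} be shown (parametrize the branch by $w=t$, so $u-u_0\sim t^3$; a local equation of $\Delta$ pulls back to $t\bigl(P_1(u_0)^{-1}-P_2(u_0)^{-1}\bigr)+O(t^4)$, so the intersection is transverse precisely when $u_0$ is not simultaneously a root of $G^{0,0}$), and your step ``$W'\mid R$'' needs $W'$ to be squarefree, since the argument you give only compares zero sets; also the parenthetical degree check for $\deg f=1$ proves nothing (a degree-$2$ cover of a curve of half the degree gives the same numbers), though it is not needed if one interprets $\Delta\cdot\tilde C$ as $\deg f^*\mathcal{O}(\Delta)$. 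The serious issue is your patch for non-generic configurations. Deformation invariance of $\Delta\cdot\tilde C$ transfers the generic computation to an arbitrary curve with invariants $M$ only if that curve can be joined to a generic one through a connected family of curves with the same invariants; connectedness (or irreducibility) of this parameter space is neither proven nor obvious, and is of the same order of difficulty as the curve counts the paper leaves as conjectures. The alternative justification --- that the homology class of $\tilde C$, hence $\Delta\cdot\tilde C$, depends only on $M$ --- is circular, because that dependence is precisely what this proposition establishes. As written, your argument proves the formula only for generic curves with the given $M$.

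The paper's proof is engineered to avoid exactly these problems: it never computes a single local multiplicity exactly. It groups the twisted diagonals $E_{l,r}$ into three triples of pairwise disjoint, hence homologous, curves (e.g.\ $E_{0,0},E_{1,2},E_{2,1}$, where each residue of $l-r$ occurs once), so that $\Delta\cdot\tilde C=\tfrac13(E_{0,0}+E_{1,2}+E_{2,1})\cdot\tilde C$. This is bounded \emph{below} by $M^{0,0}+M^{1,2}+M^{2,1}+2$, using only that over each root of $G^{l,r}$ there are three points of $\tilde C$ each contributing at least the root's multiplicity, and that each of the six $w=0$ points of $\tilde C$ lands on exactly one curve of the triple. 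Summing the analogous bounds for the three triples gives at least $12$, while the linear equivalence $[E_{0,0}]+[E_{0,1}]+[E_{0,2}]\sim[E\times D]+[D\times E]$ forces the total to equal $12$; hence all inequalities are equalities, for \emph{every} curve of the given form, with no genericity or transversality input. If you want to complete your approach you would likewise have to replace exact counts by unconditional lower bounds and then pin the total with a global identity of this kind --- at which point you have essentially reconstructed the paper's argument.
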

\begin{proof}
Consider curves $E_{l,r}\subset E\times E$ defined by equations on $(x_i,y_i,z_i)\in E$ ($i=1,2$):
\[
\frac{x_1}{z_1} = \zeta^l \frac{x_2}{z_2},\qquad \frac{y_1}{z_1} = \zeta^r \frac{y_2}{z_2}.
\]
It turns out, that $E_{0,0}$, $E_{1,2}$, $E_{2,1}$ do not intersect. Therefore they have the same homology class. So, by counting the intersection points
\[
E_{0,0}\cdot \tilde{C} = \frac13 (E_{0,0} + E_{1,2} + E_{2,1}) \cdot \tilde{C} \geq M^{0,0}+M^{1,2}+M^{2,1} + 2.
\]
Analogously,
\[
E_{0,1} \cdot \tilde{C} \geq M^{0,1}+M^{1,0}+M^{2,2} + 2,
\]
\[
E_{0,2} \cdot \tilde{C} \geq M^{0,2}+M^{2,0}+M^{1,1} + 2.
\]
The divisor $[E_{0,0}]+[E_{0,1}]+[E_{0,2}]$ is linearly equivalent to $[E\times D] + [D\times E]$ where $D$ is the divisor at infinity of $E$, which has degree $3$. Thus we obtain
\[
(E_{0,0} + E_{0,1} +E_{0,2})\cdot\tilde{C} = 12.
\]
Hence the inequalities are equalities.
\end{proof}
The diagonal corresponds to the vector $(1,-1)$. This gives us
\[
4-h_{1,2}-\bar h_{1,2} = 2 + M^{0,0}+M^{1,2}+M^{2,1}.
\]
Similarly we obtain the evaluation for the vector $(\zeta,-1)$, which corresponds to the curve $E_{1,1}$:
\[
4 - \bar\zeta h_{1,2} - \zeta \bar h_{1,2} = 2 + M^{1,1}+M^{0,2}+M^{2,0},
\]
which allows to calculate $h_{1,2}$:
\[
h_{1,2} = 2 + 2\zeta + \frac{\zeta^2(M^{1,1}+M^{0,2}+M^{2,0}) - M^{0,0}-M^{1,2}-M^{2,1}}{1-\zeta}.
\]
Going over the set of possible $M$ we find the set of possible values of $h_{1,2}$:
\[
h_{1,2}\in \{0, 1, \zeta, \zeta^2, -1, -\zeta, -\zeta^2\}.
\]
Up to a integral change of basis (a matrix $g\in\GL_2(\ZZ[\zeta])$ sends $H$ to $g^* H g$) we have two possible matrices, with determinants $3$ and $4$:
\[
H_3 = \begin{pmatrix} 2 & -1\\-1 & 2 \end{pmatrix},\quad H_4 = \begin{pmatrix} 2 & 0\\0 & 2 \end{pmatrix}.
\]
For $H_3$ we have $3$ possible matrices $M$:
\[
M = \begin{pmatrix} 2 & 0 & 0\\
 0 & 1 & 1\\
  0 & 1 & 1\end{pmatrix},\quad
M = \begin{pmatrix} 1 & 0 & 1\\
 1 & 0 & 1\\
  0 & 2 & 0\end{pmatrix},\quad
M = \begin{pmatrix} 1 & 1 & 0\\
 0 & 0 & 2\\
  1 & 1 & 0\end{pmatrix}.
\]
For $H_4$ we have $6$ possible matrices $M$:
\[
\begin{pmatrix}
2 & 0 & 0 \\
0 & 2 & 0 \\
0 & 0 & 2
\end{pmatrix}
,\;
\begin{pmatrix}
0 & 0 & 2 \\
2 & 0 & 0 \\
0 & 2 & 0
\end{pmatrix}
,\;
\begin{pmatrix}
0 & 2 & 0 \\
0 & 0 & 2 \\
2 & 0 & 0
\end{pmatrix}
\]
\[
\begin{pmatrix}
1 & 1 & 0 \\
0 & 1 & 1 \\
1 & 0 & 1
\end{pmatrix}
,\;
\begin{pmatrix}
1 & 0 & 1 \\
1 & 1 & 0 \\
0 & 1 & 1
\end{pmatrix}
,\;
\begin{pmatrix}
0 & 1 & 1 \\
1 & 0 & 1 \\
1 & 1 & 0
\end{pmatrix}
\]
Some matrices do not produce any curves passing through generic points, for instance the first $3$ matrices corresponding to $H_4$. To illustrate our method we give here an explicit parametrization of the curves corresponding to $H_3$:
\subsection{$H_3$ curves}\label{sec:H3 curves}
It is enough to consider only the first matrix, because the other $2$ can be obtained from it by automorphisms:
\[
M = \begin{pmatrix} 2 & 0 & 0\\
 0 & 1 & 1\\
  0 & 1 & 1\end{pmatrix},\quad H=\begin{pmatrix}2 & -1\\-1 & 2\end{pmatrix}.
\]
We want to determine how many curves pass through a given point. Take $p_i=(x_i,y_i,1)$ for $i=1,2$ points on $E$. If a curve $\tilde{C}$ passes through $p=(p_1,p_2)$ then we can choose the coordinates $u,v$ such that $p$ is at $v=0$. We can still apply affine transformations $(u,v)\to (a u + b v, v)$. Such a curve is then completely determined by the homogeneous polynomials $G_{l,r}(u,v)$ of degrees $M^{l,r}$ with first coefficient $1$ satisfying the following equations, which follow from $\sum_{l=0}^2 \zeta^l (P_1 - \zeta^l P_2)=0$ and a similar equation for $Q$:
\[
\sum_{l=0}^2 \zeta^l (x_1-\zeta^l y_1) \prod_{r=0}^2 G_{l,r}=0,\quad
\sum_{r=0}^2 \zeta^l (x_2-\zeta^l y_2) \prod_{l=0}^2 G_{l,r}=0.
\]
In our situation, we have $1$ polynomial of degree $2$ and $4$ polynomials of degree $1$:
\[
(x_1-y_1) G_{0,0} + (\zeta x_1-\zeta^2 y_1) G_{1,1} G_{1,2} + (\zeta^2 x_1-\zeta y_1) G_{2,1} G_{2,2},
\]
\[
(x_2-y_2) G_{0,0} + (\zeta x_2-\zeta^2 y_2) G_{1,1} G_{2,1} + (\zeta^2 x_2-\zeta y_2) G_{1,2} G_{2,2}.
\]
 The polynomial of degree $2$ is $G_{0,0}$. Note that $x_1-y_1\neq 0$ and $x_2-y_2\neq 0$. So we can eliminate $G_{0,0}$ from the equations:
\[
G_{1,1}\left((x_2-y_2)(\zeta x_1-\zeta^2 y_1) G_{1,2}-(x_1-y_1)(\zeta x_2-\zeta^2 y_2) G_{2,1}\right),
\] 
\[
=G_{2,2}\left((x_1-y_1)(\zeta^2 x_2-\zeta y_2) G_{1,2}-(x_2-y_2)(\zeta^2 x_1-\zeta y_1) G_{2,1}\right).
\]
The polynomials $G_{1,1}$, $G_{2,2}$ must be relatively prime, for otherwise $P_1, Q_1, P_2, Q_2$ would all share a factor. This implies
\[
(2\zeta+1)(x_2 y_1-x_1 y_2) G_{1,1} = \left((x_1-y_1)(\zeta^2 x_2-\zeta y_2) G_{1,2}-(x_2-y_2)(\zeta^2 x_1-\zeta y_1) G_{2,1}\right),
\]
\[
(2\zeta+1)(x_2 y_1-x_1 y_2) G_{2,2} = \left((x_2-y_2)(\zeta x_1-\zeta^2 y_1) G_{1,2}-(x_1-y_1)(\zeta x_2-\zeta^2 y_2) G_{2,1}\right).
\]
Assume $x_2 y_1-x_1 y_2\neq 0$. Then we uniquely reconstruct $G_{1,1}, G_{2,2}$ from $G_{1,2}$, $G_{2,1}$. Again $G_{1,2}$, $G_{2,1}$ are relatively prime, and by applying affine transformations we can move them to an arbitrary pair of distinct linear polynomials with first coefficient $1$, for instance $u$, $u-v$. So under our assumptions there is at most one curve passing through $p$. Vice versa, to show that the curve exist we just need to make sure that in our construction the pairs $(G_{0,0}, G_{1,2})$, $(G_{0,0}, G_{2,1})$, $(G_{0,0}, G_{1,1})$, $(G_{0,0}, G_{2,2})$, $(G_{1,1}, G_{2,2})$, $(G_{1,2}, G_{2,1})$ are relatively prime. This requires another condition: $x_1 x_2 - y_1 y_2\neq 0$.

So we have shown that the curve is unique provided
\[
z_1\neq0,\quad z_2\neq0,\quad x_2 y_1-x_1 y_2\neq 0,\quad x_1 x_2 - y_1 y_2\neq 0.
\]
This means we have to remove the divisors given by vectors $(1,\pm \zeta^i)$, $(0,1)$, $(1,0)$. Taking any other divisor class we will satisfy conditions for part (i) of Lemma \ref{lem: main lemma}. To show rationality we need to satisfy the assumptions of part (ii). So we need a divisor with small intersection number with $\tilde{C}$, i.e. a vector not of the form $(\pm\zeta^i,0)$, $(0,\pm\zeta^i)$, $(\pm\zeta^i, \pm\zeta^j)$ whose length is small with respect to the form $H$. Take $v=(1, 2+\zeta)$, which corresponds to the divisor $D_v$ consisting of $(p_1,p_2)\in E^2$ such that $p_1 + 2 p_2 + \zeta p_2=0$. We have $v^* H v=5$. So there is at most $5$ points of intersection in $D_v\cap \tilde{C}$. Going down to $E^2/\Gamma$ we obtain at most $\lfloor\frac{5}{3}\rfloor = 1$ of points of intersection $(D_v/\Gamma)\cap C$ satisfying $z_i\neq 0$. Clearly, $D_v/\Gamma$ is rational. So the conditions of Lemma \ref{lem: main lemma} are satisfied.

\subsection{$H_4$ curves}
In this case computer experiments showed that there are $3$ curves passing through a generic point for each of the last $3$ matrices $M$. However these curves can be distinguished by their incidence information with the $\Gamma$-fixed points, so probably it is possible to use these curves for an alternative rationality proof.

\subsection{Total curve count}
In total we obtain $3$ curves for $H_3$ and $9$ curves for $H_4$. However, these curves can be distinguished by our discrete invariants and by their intersections with $z_1=z_2=0$.

\section{Examples for $(a,b,c)=(2,4,4)$}
If $(a,b,c)=(2,4,4)$, we can have $n=2$ or $n=3$. Here $\zeta=\sqrt{-1}$. For $n=2$ we obtain $k=1$. For $n=3$ we obtain $k=2$. The matrices $M_{i,j}$ are $2\times 4$ with column sums $k$ and row sums $2k$. The matrices $H$ have $2k$ on the diagonal.

\begin{prop}\label{prop: 244} The intersection number of the diagonal $\Delta\subset E\times E$ and $\tilde{C}$ is given by
\[
\Delta \cdot \tilde{C} = 2k + 2 M^{0,0} + 2 M^{1,0}.
\]
\end{prop}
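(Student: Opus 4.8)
The plan is to mirror the proof of the $(3,3,3)$ proposition, replacing the homologous triple of the diagonal by the single homologous partner that survives in the $(2,4,4)$ geometry. For $l\in\{0,1\}$ and $r\in\{0,1,2,3\}$ introduce the curves $E_{l,r}\subset E\times E$ defined by
\[
\frac{x_1}{z_1^2}=(-1)^l\,\frac{x_2}{z_2^2},\qquad \frac{y_1}{z_1}=\zeta^r\,\frac{y_2}{z_2}.
\]
Since $\zeta^4=1$, the affine equation $\bigl(\tfrac{x}{z^2}\bigr)^2-\bigl(\tfrac{y}{z}\bigr)^4=1$ of $E$ is preserved by $\tfrac{x}{z^2}\mapsto(-1)^l\tfrac{x}{z^2}$ and $\tfrac{y}{z}\mapsto\zeta^r\tfrac{y}{z}$, so each $E_{l,r}$ is the graph of an automorphism of $E$, hence a smooth genus-one curve with $E_{l,r}^2=0$. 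The diagonal is $\Delta=E_{0,0}=D_{(1,-1)}$, and a point $f(u,v,w)\in\tilde{C}$ lies on $E_{l,r}$ exactly when $(u:v)$ is a common root of $P_1-(-1)^lP_2$ and $Q_1-\zeta^rQ_2$, i.e.\ a root of $G_{1,2}^{l,r}$.

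First I would isolate the curve homologous to $\Delta$ that is disjoint from it. A finite intersection $E_{0,0}\cap E_{1,r}$ would force $x_2/z_2^2=0$ and, for $r\neq0$, also $y_2/z_2=0$, which is impossible on $E$; thus $E_{1,1},E_{1,2},E_{1,3}$ are disjoint from $\Delta$ away from infinity. At the two points at infinity $\infty_{\pm}=[\pm1:1:0]$ one checks that $E_{l,r}$ passes through $(\infty_{\varepsilon_1},\infty_{\varepsilon_2})$ precisely when $\varepsilon_1=(-1)^{l+r}\varepsilon_2$; as $E_{0,0}$ uses the diagonal signs, disjointness at infinity singles out $r$ even and leaves $E_{1,2}$. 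Two genus-one curves of self-intersection $0$ that are disjoint are homologous, so $\Delta\cdot\tilde{C}=E_{1,2}\cdot\tilde{C}=\tfrac12\,(E_{0,0}+E_{1,2})\cdot\tilde{C}$.

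Next I would count points on the right-hand side. Over a finite root of $G_{1,2}^{0,0}$ (resp.\ $G_{1,2}^{1,2}$) the curve $\tilde{C}$ carries the $c=4$ lifts $(u_0,v_0,\zeta^sw_0)$, each a transverse point of $E_{0,0}$ (resp.\ $E_{1,2}$), giving $4M^{0,0}+4M^{1,2}$. Every point at infinity of $\tilde{C}$ lies over a root of $R$, where $P_i^2=Q_i^4$ forces $\varepsilon_i=P_i/Q_i^2=\pm1$, so it sits on exactly one of $E_{0,0},E_{1,2}$ according to the sign of $\varepsilon_1\varepsilon_2$; writing the map in the local parameter $w$ (with $z_i/y_i=w/Q_i$) shows each such incidence is transverse, so the $\deg R=4k$ points at infinity contribute $4k$ in total. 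Dividing by $2$ yields
\[
\Delta\cdot\tilde{C}=2k+2M^{0,0}+2M^{1,2}.
\]

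Finally, to match the statement I must reconcile the partner index $(1,2)$ with the claimed $(1,0)$, i.e.\ prove $M^{1,0}=M^{1,2}$, equivalently $M^{0,0}=M^{0,2}$. This is the main obstacle. The natural candidate is the symmetry $M^{l,r}=M^{l,r+2}$ induced by the order-two element $\zeta^2\in\Gamma$, which acts by $y/z\mapsto-y/z$ and interchanges $E_{l,r}$ with $E_{l,r+2}$; but applying $\zeta^2$ to a single factor does not preserve $\tilde{C}$, so this symmetry is genuinely a property of the \emph{admissible} discrete invariants rather than a formal identity, and I expect to establish it by running through the finite list of positive-definite Hermitian forms $H$ with $2k$ on the diagonal and checking that every matrix $M$ realized by a curve through a generic point is invariant under $r\mapsto r+2$. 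Carrying out that case analysis — together with confirming that the finite roots are simple and the incidences at infinity transverse, so that no hidden multiplicities appear — is where the real work lies; granting it, the displayed identity becomes $\Delta\cdot\tilde{C}=2k+2M^{0,0}+2M^{1,0}$.
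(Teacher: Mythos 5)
Your core computation is correct and is in substance the paper's own proof: you introduce the same curves $E_{l,r}$, identify the same disjoint homologous partner $E_{1,2}$ of the diagonal $E_{0,0}$, and arrive at $\Delta\cdot\tilde{C}=2k+2M^{0,0}+2M^{1,2}$. The genuine problem is your final ``reconciliation'' step: the identity $M^{1,0}=M^{1,2}$ that you propose to establish by a case analysis is simply false for realized curves. Already for $k=1$, $n=2$ the unique matrix for $H_2$, namely $M=\left(\begin{smallmatrix}1&1&0&0\\0&0&1&1\end{smallmatrix}\right)$, which the paper states leads to a good family of curves, has $M^{1,0}=0\neq 1=M^{1,2}$; and indeed $(e_1-e_2)^*H_2(e_1-e_2)=4-(\zeta-1)-(-\zeta-1)=6=2k+2M^{0,0}+2M^{1,2}$, whereas $2k+2M^{0,0}+2M^{1,0}=4$. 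The resolution is that the statement as printed contains a typo: $M^{1,0}$ should read $M^{1,2}$. This is confirmed by the paper's own proof and by the formulas used immediately afterwards, $4k-h_{i,j}-\bar h_{i,j}=2k+2M_{i,j}^{0,0}+2M_{i,j}^{1,2}$ and $h_{i,j}=k(1-\zeta)-M_{i,j}^{0,0}-M_{i,j}^{1,2}+\zeta M_{i,j}^{0,1}+\zeta M_{i,j}^{1,3}$. So you have proved the intended statement; the case analysis you flag as ``the main obstacle'' would have failed, and the correct move is to flag the misprint rather than try to prove a false symmetry.

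There is a second, smaller gap: you defer verifying that the finite roots are simple and the incidences at infinity are transverse, and this verification is neither generally available (nothing prevents a particular curve from having multiple roots of some $G^{l,r}$ or of $R$) nor necessary. The paper closes the equality without any transversality: it writes the analogous lower bound for the homologous pair $(E_{0,2},E_{1,0})$, adds it to the bound for $(E_{0,0},E_{1,2})$, uses the column sums $M^{0,0}+M^{1,0}=M^{0,2}+M^{1,2}=k$ to get $(E_{0,0}+E_{1,0})\cdot\tilde{C}\geq 8k$, and compares with the exact value $8k$ forced by the linear equivalence $E_{0,0}+E_{1,0}\sim E\times D+D\times E$ (here $D$ consists of the two points at infinity and each projection $f_i$ has degree $\frac{kc^2}{ab}=2k$). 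Since the total is exact, all intermediate inequalities are equalities, with arbitrary multiplicities automatically accounted for. Replacing your transversality step by this summation trick makes your argument complete.
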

\begin{proof}
We have curves $E_{l,r}\subset E\times E$ given by equations ($l=0,1$, $r=0,1,2,3$)
\[
\frac{x_1}{z_1^2} = (-1)^l \frac{x_2}{z_2^2},\qquad \frac{y_1}{z_1} = \zeta^r \frac{y_2}{z_2}.
\]
The pairs representing the same homology class are listed as follows $(E_{0,0}, E_{1,2})$, $(E_{0,1}, E_{1,3})$, $(E_{0,2}, E_{1,0})$, $(E_{0,3}, E_{1,1})$. So
\[
E_{0,0}\cdot \tilde{C} = \frac{1}{2}(E_{0,0}+E_{1,2})\cdot \tilde{C} \geq 2 M^{0,0}+2 M^{1,2} + 2k.
\]
This is because each root of $\gcd(P_1-P_2,Q_1-Q_2)$ has multiplicity $4$ in $E_{0,0}\cdot \tilde{C}$, and there are further $k c=4k$ points with $w=0$ on $\tilde{C}$ which map to the points with $z_1=z_2=0$. Producing similar inequality for $E_{1,0}$ and adding to the one above we obtain 
\[
(E_{1,0}+E_{0,0}) \cdot \tilde{C} \geq 8k.
\] 
On the other hand, $E_{1,0}+E_{0,0}$ is equivalent to $E\times D + D\times E$, where $D$ is the divisor at infinity consisting of $2$ points. So the intersection equals $8k$. Therefore our inequalities must be equalities.
\end{proof}
This allows us to compute $h_{i,j}$ as a function of the entries of $M_{i,j}$. The diagonal corresponds to the vector $e_i-e_j$, so we have
\[
4k - h_{i,j} - \bar h_{i,j} = 2k + 2 M_{i,j}^{0,0} + 2 M_{i,j}^{1,2}.
\]
Hence $\Re h_{i,j} = k-M_{i,j}^{0,0}-M_{i,j}^{1,2}$. The vector $e_i-\zeta e_j$ corresponds to the curve $E_{1,3}$, so the corresponding intersection number is 
\[
4k - \zeta h_{i,j} - \bar \zeta\bar h_{i,j} = 2k + 2 M_{i,j}^{1,3} + 2 M_{i,j}^{0,1}.
\]
We obtain $\Im h_{i,j} = -k + M_{i,j}^{0,1} + M_{i,j}^{1,3}$. Thus
\[
h_{i,j} = k(1-\zeta) - M_{i,j}^{0,0} - M_{i,j}^{1,2} + \zeta M_{i,j}^{0,1} + \zeta M_{i,j}^{1,3}.
\]

\subsection{The case $k=1$, $n=2$}
There are two $H$-matrices (up to automorphisms) for $n=2$, $k=1$, of determinants $2$ and $4$:
\[
H_2=\begin{pmatrix}2 & \zeta-1 \\ -\zeta-1 & 2\end{pmatrix},\quad
H_4=\begin{pmatrix}2 & 0 \\ 0 & 2\end{pmatrix}.
\]
For $H_2$ there is only $1$ matrix $M$:
\[
M = \begin{pmatrix} 1 & 1 & 0 & 0\\
 0 & 0 & 1 & 1\end{pmatrix}.
\]
For $H_4$ there are $2$ matrices:
\[
M = \begin{pmatrix} 1 & 0 & 1 & 0 \\
0 & 1 & 0 & 1\end{pmatrix},\quad
M = \begin{pmatrix}0 & 1 & 0 & 1 \\
1 & 0 & 1 & 0
\end{pmatrix}.
\]
It is not so difficult to check that each of the $3$ matrices $M$ leads to a good family of curves.

\subsection{The case $k=2$, $n=3$}\label{sec:k2n3}
With $n=3$ the set of possibilities is much bigger. We have $19$ possible matrices $M$. They produce the following list of $13$ possible off-diagonal entries for $H$:
\[
0,\; \pm 2,\; \pm 2\zeta,\; \pm 2 \zeta \pm 2,\; \pm \zeta \pm 1.
\]

To construct a curve we need to choose $3$ of them to get $M_{i,j}$ with $(i,j)=(1,2),(1,3),(2,3)$. So there are $19^3=6859$ possibilities. Classifying all the possible positive definite $3\times 3$ matrices $H$ up to $\GL_3(\ZZ[\zeta])$ action produces $14$ cases with determinants
\[
8, 16, 16, 24, 32, 32, 32, 36, 40, 44, 48, 48, 56, 64.
\]
Counting curves on a computer produces Table \ref{table: 244}. 

\begin{table}
{\renewcommand{\arraystretch}{1.2}
\begin{tabular}{c|c|c|c|c|c|c|c|c|c|c}
$\det(H)$ & 8 & 16 & 24 & 32 & 36 & 40 & 44 & 48 & 56 & 64\\
\hline
\vspace{2mm} $\# C$ & 0 & 0,1 & 4 & 8,10,18 & 8 & 20 & 24 & 48,60 & 80 & 212\\
\end{tabular}
}
\caption{Curve counts for $(a,b,c)=(2,4,4)$, $n=3$, $k=2$.}
\label{table: 244}
\end{table}

The $H$-matrices with $0$ curves are the following matrices with determinants $8$ resp. $16$:
\[
\begin{pmatrix}
4 & -2 \zeta - 2 & -2 \\
2 \zeta - 2 & 4 & -\zeta - 1 \\
-2 & \zeta - 1 & 4
\end{pmatrix},\quad
\begin{pmatrix}
4 & -2 & -2 \\
-2 & 4 & -\zeta - 1 \\
-2 & \zeta - 1 & 4
\end{pmatrix}.
\]
It turns out that non-existence of these curves is explained by the fact that the matrices can be conjugated to 
\[
\begin{pmatrix}
4 & 2 & 0 \\
2 & 4 & -\zeta + 3 \\
0 & \zeta + 3 & 4
\end{pmatrix},\quad
\begin{pmatrix}
4 & 2 & \zeta + 3 \\
2 & 4 & \zeta + 1 \\
-\zeta + 3 & -\zeta + 1 & 4
\end{pmatrix},
\]
which contain forbidden off-diagonal entries $\zeta + 3$.

Note that for each matrix $H$ there are several triples of matrices $M_{i,j}$. The table was obtained by adding the point counts for all triples. In some situations the total number of curves can be greater than $1$, but for some individual triples $M_{i,j}$ the number is $1$. We will work with the matrix of determinant $16$ which gives $1$ curve. The matrix is
\[
H_{16}=\begin{pmatrix}
4 & 2 & 2 \zeta \\
2 & 4 & 2 \\
-2 \zeta & 2 & 4
\end{pmatrix}.
\]
The matrices $M_{i,j}$ are as follows:
\[
M_{1,2}=\begin{pmatrix}
0 & 1 & 2 & 1 \\
2 & 1 & 0 & 1
\end{pmatrix},\quad
M_{1,3}=\begin{pmatrix}
1 & 2 & 1 & 0 \\
1 & 0 & 1 & 2
\end{pmatrix},\quad
M_{2,3}=\begin{pmatrix}
0 & 1 & 2 & 1 \\
2 & 1 & 0 & 1
\end{pmatrix}.
\]

Computer experiments show that exactly one curve passes through a generic point of $E^3$. To apply Lemma \ref{lem: main lemma} in full generality we need to choose a divisor. So we look for a vector $v$ whose $H$-norm $v^* H v$ is small, but not too small. All vectors of norm $4$ do not produce good divisors: through a generic point of such divisor there are no curves of our type. There are no vectors of norms between $4$ and $8$. There are $252$ vectors of norm $8$. Let $\Aut(H)$ be the group of matrices $g\in \GL_3(\ZZ[\zeta])$ such that $g^* H g = H$. The vectors of norm $8$ form $3$ $\Aut(H)$-orbits. Some of these vectors are also such that through a generic point of the corresponding divisor there are no curves. In the orbit of $v=(1,-2,1)$, which consists of $192$ vectors, for $168$ vectors\footnote{Elements of $\Aut(H)$ acting on $E^3$ do not change $H$, but they still permute the $8$ points at infinity. So the true symmetry group of the system is not $\Aut(H)$, but a certain congruence subgroup. This explains why we obtain different curve counts for vectors of the same $\Aut(H)$-orbit.} the curve count is $1$ and for the remaining $24$ it is zero. This vector produces a divisor $D_v/\Gamma$ satisfying the conditions of Lemma \ref{lem: main lemma}. We have $D_v \cdot \tilde{C}=8$, so if we show that at least one intersection point is at infinity, we obtain the number of finite intersection points of $D_v/\Gamma$ with $C$ is at most $\lfloor \frac{7}{4} \rfloor=1$. The points at infinity of $D_v$ are $4$ points out of the total $2^3=8$ points at infinity on $E^3$. These are the points $(p_1,p_2,p_3)$ satisfying $p_1-2 p_2+p_3=0$. The points at infinity are of order $2$, so this condition is equivalent to $p_1=p_3$. Let $C'$ be the projection of $\tilde{C}\subset E\times E \times E$ to $E\times E$ using coordinates $1, 3$. So it is enough to show that $C'$ intersects $\Delta$ at infinity. The intersection number is $8$, but there are only $4$ finite intersection points because $M_{1,3}^{0,0}=1$. Thus there must be intersections at infinity.

\section{Examples for $(a,b,c)=(2,3,6)$}
Finally, we turn to the most interesting example, which includes open cases. We have $(a,b,c)=(2,3,6)$ and $n=4$ or $n=5$. Here $\zeta=e^{\frac{2\pi i}{6}}$. For $n=4$ we obtain $k=1$. For $n=5$ we obtain $k=2$. The matrices $M_{i,j}$ are $2\times 3$ with column sums $2k$ and row sums $3k$. The matrices $H$ have $6k$ on the diagonal. Some things are simpler because there is only $1$ point at infinity, and the correspondence between $M$-matrices and the off-diagonal entries of the $H$-matrix are bijective.

\begin{prop}\label{prop: 236} The intersection number of the diagonal $\Delta\subset E\times E$ and $\tilde{C}$ is given by
\[
\Delta \cdot \tilde{C} = 6k + 6 M^{0,0}.
\]
\end{prop}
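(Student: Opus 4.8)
The plan is to run the same argument as in the cases $(a,b,c)=(3,3,3)$ and $(2,4,4)$ treated above (cf.\ Proposition~\ref{prop: 244}), specialized to the features of $(2,3,6)$. Put $u=x/z^3$ and $v=y/z^2$, so that $E$ becomes the affine curve $u^2=v^3+1$ with its unique point at infinity as the origin; here $u$ has a pole of order $3=c/a$ and $v$ a pole of order $2=c/b$ at that point. For $l\in\{0,1\}$ and $r\in\{0,1,2\}$ I introduce the auxiliary curves $E_{l,r}\subset E\times E$ cut out by $u_1=(-1)^l u_2$ and $v_1=\zeta^{2r}v_2$ (using $\zeta^{lc/a}=(-1)^l$ and $\zeta^{rc/b}=\zeta^{2r}$), so that $E_{0,0}=\Delta$. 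The first step is to record that $\{u_1=u_2\}$ decomposes as $E_{0,0}\cup E_{0,1}\cup E_{0,2}$, since $u_1=u_2$ forces $v_1^3=v_2^3$. I also note that each $E_{l,r}$ is the graph of the automorphism $(u,v)\mapsto((-1)^l u,\zeta^{2r}v)$ of $E$, which fixes the origin, so $(\infty,\infty)\in E_{l,r}$ for every $l,r$.

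Next I would pin down the two kinds of contributions to $E_{0,r}\cdot\tilde{C}$, meaning the degree of the pullback of $E_{0,r}$ under $(f_i,f_j)\colon\tilde{C}\to E\times E$ for the relevant pair $(i,j)$. A finite point $(u_0:v_0)$ contributes exactly when it is a common zero of $P_i-P_j$ and $Q_i-\zeta^{2r}Q_j$, that is, a zero of $G^{0,r}$; by $\Gamma$-equivariance all $c=6$ preimages of such a point on $\tilde{C}$ map into $E_{0,r}$, so the finite contribution is at least $6M^{0,r}$. On the other hand, every point of the degree-$6k$ divisor $\{w=0\}$ on $\tilde{C}$ maps to $(\infty,\infty)\in E_{0,r}$, giving a further contribution of at least $6k$. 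Hence $E_{0,r}\cdot\tilde{C}\ge 6M^{0,r}+6k$ for each $r$.

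To turn these into equalities I compute the total $\sum_r E_{0,r}\cdot\tilde{C}$ globally. The rational function $u_1-u_2$ on $E\times E$ has zero divisor $E_{0,0}+E_{0,1}+E_{0,2}$ and pole divisor $3[\infty\times E]+3[E\times\infty]$, giving $\sum_r E_{0,r}\sim 3([\infty\times E]+[E\times\infty])$. Since $[\infty\times E]\cdot\tilde{C}=\deg f_i=6k$ and likewise $[E\times\infty]\cdot\tilde{C}=6k$, this yields $\sum_r E_{0,r}\cdot\tilde{C}=3\cdot12k=36k$. Summing the three lower bounds and using the row-sum identity $M^{0,0}+M^{0,1}+M^{0,2}=kc/a=3k$ gives $36k=\sum_r E_{0,r}\cdot\tilde{C}\ge 6\cdot3k+3\cdot6k=36k$. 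Equality throughout forces every individual bound to be sharp, and in particular $\Delta\cdot\tilde{C}=E_{0,0}\cdot\tilde{C}=6k+6M^{0,0}$.

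The steps needing the most care are the two estimates at infinity: verifying that the $6k$ points of $\{w=0\}$ really contribute at least $6k$ to each $E_{0,r}\cdot\tilde{C}$ (a local computation at $(\infty,\infty)$, where $E_{0,r}$ passes through and $\tilde{C}$ is totally ramified over $\PP^1$), and that the finite contribution is not smaller than $6M^{0,r}$. Both are only needed as lower bounds, and they are pinned exactly once the global identity $\sum_r E_{0,r}\cdot\tilde{C}=36k$ forces each inequality to be tight; the one genuinely geometric input is the pole order $c/a=3$ of $u$ at the single point at infinity, which is precisely what makes the decomposition $\{u_1=u_2\}=E_{0,0}\cup E_{0,1}\cup E_{0,2}$ balance the degree bookkeeping. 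I expect the local analysis at $(\infty,\infty)$ to be the main technical obstacle, and the uniqueness of the point at infinity for $(2,3,6)$ is exactly what keeps it manageable.
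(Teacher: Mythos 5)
Your proof is correct and is essentially the paper's argument: the same auxiliary curves $E_{l,r}$, the same two-part lower bound $E_{0,r}\cdot\tilde{C}\geq 6M^{0,r}+6k$ (finite points coming from the roots of $G^{0,r}$, plus the $6k$ points with $w=0$ mapping to $(\infty,\infty)$), and the same device of forcing all inequalities to be equalities via a linear equivalence with the divisor at infinity. The only (harmless) difference is bookkeeping: you group the three curves $E_{0,0},E_{0,1},E_{0,2}$ using the function $u_1-u_2$ of pole order $3$ and the row sum $M^{0,0}+M^{0,1}+M^{0,2}=3k$, whereas the paper groups the two curves $E_{0,0},E_{1,0}$ using $v_1-v_2$ of pole order $2$ and the column sum $M^{0,0}+M^{1,0}=2k$; both choices close the sandwich identically.
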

\begin{proof}
We have curves $E_{l,r}\subset E\times E$ given by equations ($l=0,1$, $r=0,1,2$)
\[
\frac{x_1}{z_1^3} = (-1)^l \frac{x_2}{z_2^3},\qquad \frac{y_1}{z_1^2} = \zeta^{2r} \frac{y_2}{z_2^2}.
\]
We have
\[
E_{0,0}\cdot \tilde{C} \geq 6 M^{0,0} + 6k.
\]
This is because each root of $\gcd(P_1-P_2,Q_1-Q_2)$ has multiplicity $6$ in $E_{0,0}\cdot \tilde{C}$, and there are further $k c=6k$ points with $w=0$ on $\tilde{C}$ which map to the points with $z_1=z_2=0$. Producing similar inequality for $E_{1,0}$ and adding to the one above we obtain 
\[
(E_{1,0}+E_{0,0}) \cdot \tilde{C} \geq 24 k.
\] 
On the other hand, the divisor of the rational function $\frac{y_1}{z_1^2}-\frac{y_2}{z_2^2}$ is 
\[
E_{1,0}+E_{0,0}- 2(E\times D + D\times E),
\]
where $D$ is the point at infinity. Therefore
\[
(E_{1,0}+E_{0,0}) \cdot \tilde{C} = 2(E\times D + D\times E)\cdot \tilde{C}=24 k.
\]
Therefore our inequalities must be equalities.
\end{proof}

This allows us to compute $h_{i,j}$ as a function of the entries of $M_{i,j}$. The diagonal corresponds to the vector $e_i-e_j$, so we have
\[
12k - h_{i,j} - \bar h_{i,j} = 6k + 6 M_{i,j}^{0,0}.
\]
The vector $e_i-\zeta e_j$ corresponds to the curve $E_{1,2}$, so the corresponding intersection number is 
\[
12k - \zeta h_{i,j} - \bar \zeta\bar h_{i,j} = 6k + 6 M_{i,j}^{1,2}.
\]
So we can recover $h_{i,j}$:
\[
h_{i,j} = (4-2\zeta) k - (2+2\zeta) M_{i,j}^{0,0} + (4\zeta-2) M_{i,j}^{1,2}.
\]

\subsection{The case $k=1$, $n=4$}\label{sec:k1n4}
The diagonal entries of $H$ are $6$ and the possible off-diagonal entries are in the set
\[
\mathbf{h}=\{0,\; 2 \zeta - 4,\; 2 \zeta + 2,\; 4 \zeta - 2,\; -4 \zeta + 2,\; -2 \zeta - 2,\; -2 \zeta + 4\}.
\]
We classified all matrices $H$ up to $GL_4(\ZZ[\zeta])$-equivalence satisfying the following conditions:
\begin{enumerate}
\item $H$ is positive definite.
\item $H_{i,i}=6$ for $i=1,2,3,4$.
\item There is no vector $v\in\ZZ[\zeta]^4$ such that $v^* H v<6$.
\item For any $v_1, v_2\in\ZZ[\zeta]^4$ such that $v_i^* H v_i=6$ we have $v_1 H v_2 \in \mathbf{h}$. 
\end{enumerate}

It turns out there are $5$ matrices with determinants $144, 432, 576, 864, 1296$:
\[
H_{144}=\begin{pmatrix}
6 & 2 \zeta - 4 & 0 & 0 \\
-2 \zeta - 2 & 6 & 2 \zeta - 4 & 0 \\
0 & -2 \zeta - 2 & 6 & 2 \zeta - 4 \\
0 & 0 & -2 \zeta - 2 & 6
\end{pmatrix},
\]
\[
H_{432}=
\begin{pmatrix}
6 & 2 \zeta - 4 & 0 & 0 \\
-2 \zeta - 2 & 6 & 2 \zeta - 4 & 0 \\
0 & -2 \zeta - 2 & 6 & 0 \\
0 & 0 & 0 & 6
\end{pmatrix},
\]
\[
H_{576}=
\begin{pmatrix}
6 & 2 \zeta - 4 & 0 & 0 \\
-2 \zeta - 2 & 6 & 0 & 0 \\
0 & 0 & 6 & 2 \zeta - 4 \\
0 & 0 & -2 \zeta - 2 & 6
\end{pmatrix},
\]
\[
H_{864}=
\begin{pmatrix}
6 & 2 \zeta - 4 & 0 & 0 \\
-2 \zeta - 2 & 6 & 0 & 0 \\
0 & 0 & 6 & 0 \\
0 & 0 & 0 & 6
\end{pmatrix},
\quad
H_{1296}=
\begin{pmatrix}
6 & 0 & 0 & 0 \\
0 & 6 & 0 & 0 \\
0 & 0 & 6 & 0 \\
0 & 0 & 0 & 6
\end{pmatrix},
\]
Note that the off-diagonal values $0$ resp. $2\zeta-4$ correspond to $M_{i,j}=\begin{pmatrix} 1 & 1 & 1\\ 1& 1 & 1\end{pmatrix}$, $M_{i,j}=\begin{pmatrix} 2 & 1 & 0\\ 0& 1 & 2\end{pmatrix}$.
The curve counts are given in Table \ref{table: 236 n4}. It is not clear why curves corresponding to $H_{864}$ do not pass through generic points.
\begin{table}
{\renewcommand{\arraystretch}{1.2}
\begin{tabular}{c|c|c|c|c|c}
$\det(H)$ & 144 & 432 & 576 & 864 & 1296\\
\hline
$\# C$ & 1 & 6 & 12 & 0 & 72\\
\end{tabular}
}
\caption{Curve counts for $(a,b,c)=(2,3,6)$, $n=4$, $k=1$.}
\label{table: 236 n4}
\end{table}

We turn our attention to the matrix $H=H_{144}$, which already implies unirationality of $X_{4,6}$ and will also imply rationality if we find a ``good'' divisor class. The group 
\[
\Aut(H) = \{g\in\GL_4(\ZZ(\zeta))\,|\, g^* H g = H\}
\]
has order $155520$ and acts transitively on the $240$ vectors of $H$-norm $6$ and on the $2160$ vectors of $H$-norm $12$. Vectors of norm $6$ intersect $C$ only at infinity, so we pick a vector of norm $12$. Some of the vectors of norm $12$ correspond to the ``diagonals'', for instance $v=(1,0,1,0)$. For this vector we obtained $0$ curves. However picking $v=(0,1,2,1)$, and any other vector not of the form $(0,0,\zeta^i, \zeta^j)$ for some $i,j$ or a permutation of such, we obtain $1$ curve.

 Note that for any $v$ of norm $12$ and any curve $C$ of our kind the number of intersection points $\#(C\cap D_v/\Gamma)$ outside of the $\Gamma$-fixed points is at most $1$. This is true because $\tilde{C}\cdot D_v=12$, and the intersection $D_v\cap C$ contains at least one point at infinity.

So we make the following Conjecture, which by Lemma \ref{lem: main lemma} implies rationality of $X_{4,6}$:
\begin{conjecture}\label{conj:X46}
For $p\in E^4$ denote by $\#(p)$ the number of curves $\tilde{C}$ of our type corresponding to the matrix $H_{144}$ and containing $p$. Then for a generic point $p\in E^4$ we have $\#(p)=1$. Moreover, for a generic point $p\in D_{0,1,2,1}$ we have $\#(p)=1$, where
\[
D_{0,1,2,1} = \{(p_1,p_2,p_3,p_4)\in E^4\,|\, p_2 + 2 p_3 + p_4=0\}.
\]
\end{conjecture}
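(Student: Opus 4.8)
The plan is to replace the finite-field heuristic by a genuine enumerative argument, realizing $\#(p)$ as the length of the zero-dimensional scheme cut out, for generic $p\in E^4$, by the main equations \eqref{eq:maineq} after dividing by the two-dimensional group of reparametrizations of $\tilde{C}$. Writing $\mathcal{S}_p$ for this scheme, the goal becomes twofold: (a) show $\dim\mathcal{S}_p=0$ for generic $p$, and (b) show $\deg\mathcal{S}_p=1$, i.e. that $\mathcal{S}_p$ is a single reduced point. The passage from "the number of $\bar F$-points equals $1$" to statement (b) over $\CC$ is exactly the gap between the computer experiment and a proof: one must exhibit good reduction of $\mathcal{S}_p$ at the chosen prime and apply flatness and semicontinuity to transport the count to characteristic zero, while ruling out a positive-dimensional component in characteristic $0$ that a naive point count would miss.

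First I would exploit the very special shape of $H_{144}$. It is \emph{tridiagonal}: the only nonzero off-diagonal entries are $h_{i,i+1}=2\zeta-4$, carrying $M_{i,i+1}=\begin{pmatrix}2&1&0\\0&1&2\end{pmatrix}$, while every non-adjacent pair carries $h_{i,j}=0$, with $M_{i,j}=\begin{pmatrix}1&1&1\\1&1&1\end{pmatrix}$. This chain structure $1-2-3-4$ suggests solving the system one link at a time, imitating the explicit elimination carried out for the $H_3$ curves in Section \ref{sec:H3 curves}, where eliminating $G_{0,0}$ and using coprimality of $G_{1,1},G_{2,2}$ forced the remaining forms into a normalizable pair and produced uniqueness. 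Here I would solve the $(1,2)$ link in the same spirit, spend the affine reparametrization group to normalize it, and then propagate along $2\to 3\to 4$, using at each step that the adjacent-pair equations reconstruct the next block of $G$-polynomials from the previous one, and that the vanishing entries for non-adjacent pairs impose only the loose coprimality constraints $\gcd(G_{i,j}^{l,r},G_{i,j}^{l',r'})=1$ rather than new degree drops. The hope is that the chain resolves link by link with a unique normalized solution at each stage.

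The role of the lattice $E_8$, flagged in the abstract, enters through the intersection behaviour at the $\Gamma$-fixed points. Viewed through its trace form, $(\ZZ[\zeta]^4,H_{144})$ is a version of the $E_8$ lattice, its $240$ norm-$6$ vectors realizing the $240$ roots of $E_8$; this is why $\Aut(H_{144})$ is the Eisenstein reflection group of order $155520$ and acts transitively on them. Since a norm-$6$ vector $v$ gives a divisor $D_v$ meeting $C$ only at the points at infinity, the boundary part of every intersection is rigidly governed by the root system, and I would use this accounting to show that no member of the family degenerates or acquires excess intersection along the boundary; it is the same accounting that underlies the remark in Section \ref{sec:k1n4} that a norm-$12$ vector meets $C$ in at most one finite point. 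For the second assertion, the count on $D_{0,1,2,1}=\{p_2+2p_3+p_4=0\}$, I would restrict the entire elimination to this codimension-one subvariety and rerun the chain argument with the single extra linear relation among the $p_i$ imposed; because $(0,1,2,1)$ has $H$-norm $12$ and is not of the degenerate form $(0,0,\zeta^i,\zeta^j)$, the generic fibre over $D_{0,1,2,1}$ should again be one reduced curve.

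The main obstacle I expect is step (b): the reducedness and multiplicity-one claim, together with the transfer from characteristic $p$ to characteristic $0$. The chain argument plausibly bounds $\#(p)\le 1$ and produces a solution, but proving that $\mathcal{S}_p$ is reduced with no embedded or positive-dimensional components in characteristic zero, so that the Gr\"obner count over $\bar F$ faithfully computes the complex count, is precisely where a clean enumerative input is required. A transversality statement, or a conservation-of-number argument through a controlled degeneration of $E$ to a nodal cubic (where the covers should degenerate to a combinatorial, lattice-theoretic count in $E_8$), is the missing ingredient that currently keeps this a conjecture rather than a theorem.
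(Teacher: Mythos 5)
The statement you are attacking is not proved in the paper at all: it is Conjecture~\ref{conj:X46}, and the paper's only support for it is a finite-field experiment ($10000$ random points on $E^4$ and on $D_{0,1,2,1}$ over $\GF(1000003)$, with curve counts obtained by Gr\"obner basis computations). So there is no proof in the paper to compare yours against; the only question is whether your proposal closes the gap the author deliberately left open. It does not, and you say so yourself: your step (b) --- reducedness and multiplicity one of the scheme $\mathcal{S}_p$, together with the transfer from characteristic $p$ to characteristic $0$ --- is exactly the content of the conjecture, and your final paragraph names it as ``the missing ingredient.'' A proposal whose concluding step is acknowledged to be absent is a research program, not a proof, and the first half (the chain elimination) is likewise only stated as a hope rather than carried out.

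Beyond that, two concrete points would need repair even as a program. First, your claim that for non-adjacent pairs the entries $h_{i,j}=0$ ``impose only the loose coprimality constraints'' is not correct: $h_{i,j}=0$ corresponds to the specific discrete invariant $M_{i,j}=\begin{pmatrix}1&1&1\\1&1&1\end{pmatrix}$, and the main equations \eqref{eq:maineq} for those pairs are genuine polynomial conditions coupling all the links; the conjecture counts curves within this fixed stratum of discrete invariants, so the system cannot be resolved one link at a time without verifying the non-adjacent compatibilities at every stage. The $H_3$ elimination of Section~\ref{sec:H3 curves} that you imitate worked precisely because $n=2$: there were no non-adjacent pairs, and the forms involved were linear or quadratic. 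Second, even granting an upper bound $\#(p)\le 1$ from such an elimination, the existence half (generically $\#(p)=1$ rather than $0$) needs a separate mechanism, and the paper's own data shows this is delicate: one of the $10000$ sample points had $\#(p)=0$, and for the matrix $H_{864}$ of the same shape the generic count is identically $0$ for reasons the paper explicitly says it cannot explain. Any genuine argument --- including your proposed degeneration of $E$ to a nodal cubic and a lattice-theoretic count in $E_8$ --- must produce something that distinguishes $H_{144}$ from $H_{864}$; your sketch contains no such mechanism, so the conjecture remains exactly as open as the paper leaves it.
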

We verified this conjecture by testing the statement on $10000$ random points on $D_{0,1,2,1}$ and $10000$ random points on $E^4$ over the field $\GF_{1000003}$. Only $1$ point got ``unlucky'' and the number of curves was $0$. For every other point the number was $1$. Counting the curves took $\approx 0.05$ seconds per point on an ordinary laptop.

\subsection{The case $k=2$, $n=5$}
Finally we turn to the most interesting case. The diagonal entries of $H$ are $12$ and the possible off-diagonal entries are in the set
\[
\mathbf{h}=\{0,\, -4 \zeta + 8,\, 2 \zeta - 4,\, 6,\, 4 \zeta - 8,\, -2 \zeta + 4,\, 2 \zeta + 2,\, -4 \zeta - 4,\, -4 \zeta + 2,\, 4 \zeta - 2,
\]
\[
-6 \zeta,\, 6 \zeta - 6,\, 4 \zeta + 4,\, -6 \zeta + 6,\, -2 \zeta - 2,\, 6 \zeta,\, -8 \zeta + 4,\, -6,\, 8 \zeta - 4\}.
\]
We could not classify all such matrices $H$ up to $\GL_5(\ZZ[\zeta])$-equivalence because the set of possibilities is too big. However the following matrix seems to be the only matrix up to $\GL_5(\ZZ[\zeta])$-equivalence of the smallest possible determinant $24^3=13824$.
\[
H_{13824} = \begin{pmatrix}
12 & 4\zeta + 4 & 4\zeta + 4& 4\zeta + 4 & 4\zeta + 4\\
-4\zeta+8 & 12 & 4\zeta + 4 & 4\zeta + 4 & 4\zeta + 4\\
-4\zeta+8 & -4\zeta+8 & 12 & 4\zeta + 4 & 6\\
-4\zeta+8 & -4\zeta+8 & -4\zeta+8 & 12 & 6\\
-4\zeta+8 & -4\zeta+8 & 6 & 6 & 12\\
\end{pmatrix}
\]
We consider $H=H_{13824}$. Note that the off-diagonal value $H_{i,j}=4\zeta + 4$ resp. $H_{i,j}=6$ corresponds to $M_{i,j}=\begin{pmatrix}0&4&2\\4&0&2\end{pmatrix}$ resp. $M_{i,j}=\begin{pmatrix}0&3&3\\4&1&1\end{pmatrix}$.
There are exactly $336$ vectors of $H$-norm $12$. Since every curve $\tilde C$ has $12$ points at infinity, these curves cannot pass through generic points of divisors corresponding to these vectors. Just for reference we mention that the size of the group $\Aut(H)=\{g\in \GL_5(\ZZ[\zeta]) \,|\, g^* H g = H\}$ is $6912$. The vectors of $H$-norm $12$ form $3$ orbits of sizes $48$, $192$, $96$, represented by the basis vectors $e_1$, $e_3$, $e_5$. The next possible $H$-norm is $18$, and there are $768$ vectors of norm $18$ forming a single $\Aut(H)$-orbit. For such a vector $v$ we have  $D_v\cdot \tilde{C}=18$, and at least $12$ points of intersection are at infinity. Therefore $|D_v/\Gamma \cap C|\leq 1$. Some vectors represent ``generalized diagonals'', for instance $(0,0,1,0,-\zeta)$. We found that our curves do not pass through generic points on the corresponding divisors. Taking any vector different from those do seem to produce good divisors, for instance we take $v=(1,0,\zeta, 0,-1)$. 

The following conjecture implies unirationality of $X_{5,6}$ by part (i), Lemma \ref{lem: main lemma}:
\begin{conjecture}\label{conj:X56 1}
For $x\in E^5$ denote by $\#(x)$ the number of curves $\tilde{C}$ of our type corresponding to the matrix $H_{13824}$ and containing $x$. Then for a generic point $x\in E^5$ we have $\#(x)=1$.
\end{conjecture}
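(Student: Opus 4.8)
The plan is to reduce the assertion to a zero-dimensional solution count and then to force that count to be exactly $1$, in the spirit of the $(a,b,c)=(3,3,3)$ computation of Section~\ref{sec:H3 curves}. By the dimension count that gave $k=\tfrac{2}{c-n}=2$, the family of curves of homology type $H_{13824}$ through a generic $x\in E^5$ is expected to be finite, so the real content is that $\#(x)$ is neither $0$ nor $\ge 2$. First I would fix the reparametrization gauge by sending $(u,v,w)=(1,0,1)$ to a chosen lift of $x$ and quotienting by the residual $2$-dimensional group $(u,v)\mapsto(\alpha u+\beta v,v)$. A curve is then encoded by the tuple of homogeneous forms $G_{i,j}^{l,r}(u,v)$, $1\le i<j\le 5$, of the degrees $M_{i,j}^{l,r}$ prescribed by $H_{13824}$, subject to $\gcd(G_{i,j}^{l,r},G_{i,j}^{l',r'})=1$ for $l\ne l'$, $r\ne r'$, and to the main equations~\eqref{eq:maineq}.

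The elimination then proceeds through the linear syzygies among those equations. On the $Q$-side $(b=3)$ one has, for each pair $(i,j)$, the relation
\[
\sum_{r=0}^{2}\zeta^{2+2r}\,(y_i-\zeta^{2r}y_j)\,G_{i,j}^{0,r}G_{i,j}^{1,r}=0,
\]
obtained by multiplying the $r$-th $Q$-equation by $\zeta^{2+2r}$ and summing, using $1+\zeta^2+\zeta^4=0$; this eliminates $Q_i,Q_j$ exactly as $\sum_l\zeta^l(P_1-\zeta^lP_2)=0$ did in Section~\ref{sec:H3 curves}. On the $P$-side $(a=2)$ there is no such within-pair syzygy, so the $P_i$ must instead be eliminated through the global consistency relations $(P_i-P_j)+(P_j-P_k)=(P_i-P_k)$ that bind the pairs into one tuple $(P_1,\dots,P_5)$. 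Since both admissible $M$-types have $M_{i,j}^{0,0}=0$, every $G_{i,j}^{0,0}$ is a unit, which collapses much of the system; I would then use coprimality to strip common factors and solve the degree-$4$ forms ($M^{0,1}=M^{1,0}=4$) in terms of the shorter ones, hoping, as in Section~\ref{sec:H3 curves}, that the residue is a single admissible configuration modulo the affine gauge.

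The hard part is the upper bound $\#(x)\le 1$; existence $\#(x)\ge1$ should follow from exhibiting a single solution of \eqref{eq:maineq} at a special point together with a tangent-space check that the resulting family dominates $E^5$. For $(3,3,3)$ uniqueness closed in a few lines because every $G$ had degree $\le 2$ and there were only two factors to juggle; here degrees run up to $4$, there are $\binom52=10$ pairs and hence $60$ forms $G_{i,j}^{l,r}$, and the $P$-consistency conditions are genuinely global rather than pairwise. Exhibiting the resulting elimination ideal as principal and radical with a single root is the step I expect to resist a closed-form treatment, which is precisely why the paper falls back on Gr\"obner bases over $\GF(p)$.

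To obtain a structural proof I would try two routes. One is to degenerate $E$ to a nodal or cuspidal rational cubic, where the pairs $(P_i,Q_i)$ partly decouple and the count may split into elementary toric/tropical pieces, after which a conservation-of-number argument under the flat degeneration would pin the generic count. The second, where I expect the real leverage, is to exploit the exceptional structure of the rank-$10$ Hermitian $\ZZ[\zeta]$-lattice $(\ZZ[\zeta]^5,H_{13824})$ --- its $336$ minimal (norm-$12$) vectors and automorphism group of order $6912$, which tie it to the lattice $\Lambda_{10}$ --- and to translate ``exactly one curve through a generic point'' into a rigidity statement for configurations of minimal vectors, paralleling the role of the $E_8$ lattice behind $H_{144}$ in the case $n=4$ (Conjecture~\ref{conj:X46}). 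Establishing such a dictionary between the enumerative count and lattice rigidity is, to my mind, the crux; absent it, only computer evidence of the kind reported after Conjecture~\ref{conj:X46} is available.
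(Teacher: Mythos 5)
You should first be aware of the status of this statement in the paper: it is not proved there at all. It is one of the paper's conjectures, and the only support offered is computational --- the number of solutions of the main equations \eqref{eq:maineq} (with the coprimality constraints imposed via auxiliary variables and resultant equations) is counted over $\GF(1000003)$ by Gr\"obner-basis methods for $10$ random points of $E^5$, each run taking several minutes, and the answer was $1$ each time. Measured against that, your setup faithfully reproduces the paper's computational framework, and your technical observations are correct: the gauge fixing, the $Q$-side syzygy $\sum_{r=0}^{2}\zeta^{2+2r}(y_i-\zeta^{2r}y_j)\,G_{i,j}^{0,r}G_{i,j}^{1,r}=0$ (the overall factor $\zeta^{2}$ is harmless; the point is $\sum_r \zeta^{2r}=\sum_r \zeta^{4r}=0$), the absence of any within-pair syzygy on the $P$-side when $a=2$, so that elimination must pass through the global cocycle relations $(P_i-P_j)+(P_j-P_k)=P_i-P_k$, and the fact that both admissible $M$-types for $H_{13824}$ have $M_{i,j}^{0,0}=0$, making each $G_{i,j}^{0,0}$ constant.

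As a proof, however, the proposal has a genuine gap, which you candidly name yourself: neither $\#(x)\le 1$ nor $\#(x)\ge 1$ is established. The elimination sketch stops exactly where the $(3,3,3)$ argument of Section \ref{sec:H3 curves} ceases to generalize: with $60$ forms $G_{i,j}^{l,r}$ in $120$ coefficients and genuinely global $P$-consistency conditions, you give no mechanism forcing the eliminated system to have a single solution modulo the affine gauge, and no dominance or tangent-space argument for existence. The two structural routes you then propose --- degeneration of $E$ to a rational nodal or cuspidal curve with a conservation-of-number argument, and a dictionary between the enumerative count and rigidity of configurations of the $336$ minimal vectors of the rank-$10$ lattice $(\ZZ[\zeta]^5,H_{13824})\sim\Lambda_{10}$ --- are reasonable research directions (the second resonates with the paper's closing remark about $D_6$, $E_8$, $\Lambda_{10}$), but neither is carried out even in outline; the degeneration route in particular would require showing that the count is invariant under the degeneration and that no solutions escape into the boundary or into non-reduced configurations violating $\gcd(P_i,Q_i)=1$, none of which is addressed. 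So what you have is a correct re-derivation of the paper's computational scheme together with an honest plan, not a proof --- which is consistent with the statement being an open conjecture, but it should not be mistaken for a verification of it.
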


The following conjecture together with rationality of $X_{4,6}$ implies rationality of $X_{5,6}$ by Lemma \ref{lem: main lemma}:
\begin{conjecture}\label{conj:X56 2}
With $\#(p)$ defined in Conjecture \ref{conj:X56 1}, for a generic point $p\in D_{1,0,\zeta,0,-1}$ we have $\#(p)=1$, where
\[
D_{1,0,\zeta,0,-1} = \{(p_1,p_2,p_3,p_4,p_5)\in E^5\,|\, p_1 + \zeta p_3 = p_5\}.
\]
\end{conjecture}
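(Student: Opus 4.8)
The plan is to read $\#(p)$ as the number of solutions of the system of \emph{main equations} \eqref{eq:maineq} attached to the fixed matrix $H=H_{13824}$, and to control this number as $p=(p_1,\dots,p_5)$ ranges over the divisor $D_v$ with $v=(1,0,\zeta,0,-1)$, defined by $p_1+\zeta p_3=p_5$. Granting Conjecture \ref{conj:X56 1}, the locus $B:=\overline{\{x\in E^5:\#(x)\ne 1\}}$ is a proper closed subset of $E^5$, and since $E^5\setminus B\subseteq\{\#=1\}$, the assertion of Conjecture \ref{conj:X56 2} follows as soon as one knows $D_v\not\subseteq B$: then $D_v\cap(E^5\setminus B)$ is open dense in $D_v$ and carries $\#(p)=1$. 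Thus, modulo Conjecture \ref{conj:X56 1}, everything reduces to exhibiting a single point $p_0\in D_v$ with $\#(p_0)=1$. The intersection-theoretic hypothesis of Lemma \ref{lem: main lemma}(ii) is independent and already secured, since $D_v\cdot\tilde C=18$ while every $\tilde C$ carries $12$ forced points at infinity, leaving at most one finite intersection point on the quotient.

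\textbf{The upper bound $\#(p)\le 1$.} To make the argument self-contained and to avoid leaning on Conjecture \ref{conj:X56 1}, I would establish $\#(p)\le 1$ on a dense open subset of $D_v$ directly, generalizing Section \ref{sec:H3 curves}. A curve through $p$ is a tuple of homogeneous polynomials $G_{i,j}^{l,r}(u,v)$ of the degrees $M_{i,j}^{l,r}$ prescribed by $H$, normalized to leading coefficient $1$ and subject to the coprimality constraints of Section \ref{sec:discrete invs}, from which $P_i,Q_i$ are recovered via \eqref{eq:maineq}. After gauge-fixing the $2$-dimensional group of reparametrizations of $\PP^1$ exactly as for $H_3$, I would eliminate the $P_i,Q_i$ using the factorization identities
\[
\prod_{l=0}^{1}(P_i-\zeta^{3l}P_j)=P_i^2-P_j^2=Q_i^3-Q_j^3=\prod_{r=0}^{2}(Q_i-\zeta^{2r}Q_j)
\]
forced by the common value $R=P_i^2-Q_i^3$, and then solve for the blocks $G_{i,j}^{l,r}$ in order of increasing degree, the conditions $\gcd(G_{i,j}^{l,r},G_{i,j}^{l',r'})=1$ for $l\ne l'$, $r\ne r'$ making the system triangular so that the low-degree blocks determine the rest up to the already-fixed gauge.

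\textbf{Existence and the exact count.} For $\#(p)\ge 1$ I would argue that the evaluation map $f\colon\cC\to E^5$ is dominant and that no curve lies in $D_v$, so the complement of its image sits inside a proper closed $B'\subset E^5$; hence $D_v\not\subseteq B'$ as soon as one point $p_0\in D_v$ lies on a curve, and then $D_v\cap f(\cC)$ is dense in $D_v$. Combined with the upper bound this yields $\#(p)=1$ for generic $p\in D_v$. Whether one passes through Conjecture \ref{conj:X56 1} or through this dominance argument, the crux is the same: producing and certifying over $\CC$ a single $p_0\in D_v$ carrying exactly one reduced curve $\tilde C_0$. This is precisely the statement the authors test over $\GF(p)$; a rigorous route would be to exhibit such a $p_0$ together with an explicit unique $\tilde C_0$, verify that the fibre is reduced, and then lift the count from characteristic $p$ to characteristic $0$ by a flatness and semicontinuity argument.

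\textbf{Main obstacle.} Unlike the $H_3$ case, the blocks $G_{i,j}^{l,r}$ have degree as large as $4$ rather than $1$, so the final elimination, and the bookkeeping of the coprimality and genericity loci that must be removed, is far beyond what can be carried out by hand; this is exactly why the count remains conjectural. I expect the way in is the lattice structure emphasized in the abstract. The $336$ vectors of $H$-norm $12$ and the automorphism group $\Aut(H)$ of order $6912$ identify $H_{13824}$ with a rescaling of the laminated lattice $\Lambda_{10}$, whose kissing number is $336$. I would try to show that the admissible systems $(M_{i,j})$ that genuinely extend to a curve through a generic $p\in D_v$ correspond to a single $\Aut(H)$-orbit of minimal-vector configurations in $\Lambda_{10}$, thereby forcing $\#(p)=1$ a priori. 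Converting this numerical match into a rigorous bijection, rather than a finite-field head count, is where I expect the real difficulty to lie.
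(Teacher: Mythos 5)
You are not comparing against a hidden argument: the statement you were asked to prove is one of the paper's \emph{conjectures}. The paper offers no proof of it at all, only experimental evidence (ten random points on $D_{1,0,\zeta,0,-1}$ over $\GF(1000003)$, each yielding exactly one curve, at about six minutes per point). So your proposal has to stand on its own as a proof, and by your own admission it does not: the elimination generalizing Section \ref{sec:H3 curves}, the dominance of $f\colon\cC\to E^5$, the certified point $p_0$, and the $\Lambda_{10}$-orbit bijection are all left as open steps. The parts of your write-up that are solid --- the count $D_v\cdot\tilde C=18$ with at least $12$ forced points at infinity, hence at most one finite intersection point on the quotient, and the identification of $H_{13824}$ with a rescaled $\Lambda_{10}$ with $336$ minimal vectors --- are restatements of observations already in the paper, not progress on the count itself.

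Beyond incompleteness, your one concrete reduction contains a genuine logical flaw. Granting Conjecture \ref{conj:X56 1}, you set $B=\overline{\{x\in E^5:\#(x)\ne 1\}}$ and correctly note that it suffices to show $D_v\not\subseteq B$; but you then claim this ``reduces to exhibiting a single point $p_0\in D_v$ with $\#(p_0)=1$''. That is not enough: $B$ is a \emph{closure}, and a point with $\#(p_0)=1$ can perfectly well lie in $B$, since the bad locus $\{\#\ne 1\}$ may accumulate onto $p_0$ along a subvariety without containing it. What you actually need is that $\{\#=1\}$ is \emph{open} near $p_0$, which requires the incidence correspondence (curves with a marked point of $E^5$) to be finite, flat and unramified over a neighborhood of $p_0$. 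Properness is exactly what is at risk: the paper's own computation shows that dropping the inequalities \eqref{eq: extra ineq PQ} produces a solution scheme of length $99$ with very fat components where $\gcd(P_i,Q_i)=1$ fails, i.e.\ the natural compactification of the family is badly degenerate at the boundary, so solutions can escape to or collide with the boundary under specialization. The same issue undermines your final step of lifting a count from $\GF(p)$ to $\CC$ ``by a flatness and semicontinuity argument'': without finiteness and flatness over a Zariski-open part of the arithmetic base, a fibre count at one closed point of positive characteristic bounds nothing about the generic fibre in characteristic zero. Constructing a compactified incidence scheme that is finite and flat where it matters is the actual mathematical content separating the paper's experiments (and your program) from a proof, and it is missing here.
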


\subsection{Computations for $n=5$}
The computations in these cases take much more time than in the $n=4$ case. It can probably be explained by the fact that the set of divisors where the number of curves is not $1$ is huge: for instance, it must contain all the $336$ divisors $D_v$ corresponding to vectors $v$ of $H$-norm $12$. Another issue is that when we create the ideal parametrizing our curves, we have besides equations also inequalities of the form
\begin{equation}\label{eq: extra ineq PQ}
\mathrm{resultant}\,(P_i, Q_i)\neq 0\qquad (1\leq i\leq 5).
\end{equation}
Each inequality is imposed by adding an extra variable $J_i$ and an extra equation
\begin{equation}\label{eq: extra eqn PQ}
J_i\;\mathrm{resultant}\,(P_i, Q_i) = 1\qquad(1\leq i\leq 5).
\end{equation}
Note that the degrees of $P_i$ and $Q_i$ are $6$ and $4$ respectively, so the resultant has degree $24$ and these extra equations are very long. On the other hand, when we tried to keep only the equations without the inequalities the length of the scheme of solutions grew up to $99$. The scheme turned out to contain a single isolated point and several very fat points failing the conditions $\gcd(P_i, Q_i)=1$. 

The computation with the inequalities \eqref{eq: extra ineq PQ} takes $\approx$ 1 hour 15 minutes on an ordinary laptop (for each random point on $E^5$). It turns out, it is better to extend the set of inequalities that translate to equations \eqref{eq: extra eqn PQ} by a much larger set of $32$ inequalities
\begin{equation}\label{eq: extra ineq GG}
\mathrm{resultant}\,(G_{i,j}^{l,r}, G_{i,j}^{l',r'})\neq 0\qquad \begin{matrix} (1\leq i<j\leq 5,\; 0\leq l<l'\leq 1,\; 0\leq r, r'\leq 2\;:\\\; r\neq r',\;M_{i,j}^{l,r}\neq 0,\; M_{i,j}^{l',r'}\neq 0).\end{matrix}.
\end{equation}
For each inequality we have to create a new variable and a new equation as in \eqref{eq: extra eqn PQ}. These inequalities formally follow from \eqref{eq: extra ineq PQ} as explained in Section \ref{sec:discrete invs}, but their degrees are much smaller. On the other hand, inequalities \ref{eq: extra ineq GG} do not seem to imply \ref{eq: extra ineq PQ}. Thus we must additionally test that every solution we find satisfies \ref{eq: extra ineq PQ}.

It turns out, that it is faster to build the ideal step-by step. On each \emph{step} we add some new equations and recompute the Gr\"obner basis. In the very beginning we choose a cell of the cell decomposition of the weighted projective space we do computations in. The total number of variables is $120$ (we have $10$ pairs $1\leq i<j\leq 5$ and for each pair $i,j$ we have $6$ polynomials $G_{i,j}^{l,r}$ whose degrees are given by the entries of $M_{i,j}$). Among these variables $42$ have weight $1$, $38$ have weight $2$, $22$ have weight $3$ and $18$ have weight $4$. We order the variables by weight, and if the weights agree by the degree of the polynomial they are coefficients of. Because we should consider the solutions up to translation, we set the very first variable to $0$. The choice of a cell in the weighted projective space means we set the first $r$ variables to $0$, the $r+1$-st variable to $1$. We need to do this for every $r$, $1\leq r \leq 119$. Then we have $3$ \emph{steps} (for each $r$):
\begin{enumerate}
\item Add equations coming from elimination of $P_i, Q_i$ from the main equations \eqref{eq:maineq}.
\item Add variables and equations representing \eqref{eq: extra ineq GG}.
\item Add variables and equations representing \eqref{eq: extra ineq PQ}.
\end{enumerate}
Then we compute the dimension over the base field of the quotient ring with respect to the ideal obtained in the final \emph{step}. This number divided by the weight of the variable we made equal to $1$ is the number of points in the given cell. If after some step we obtain ideal $(1)$, this means there is no solutions in a given cell, so we abort and pass to the next cell, i.e. next value of $r$. In all situations we encountered, all the solutions belonged to the biggest cell.

Complete computation for each point $p\in E^5(\GF_{1000003})$ took $\approx 6$ minutes on an ordinary laptop. We made $10$ trials for each of the Conjectures \ref{conj:X56 1}, \ref{conj:X56 2} and obtained exactly $1$ curve in all cases.

\begin{rem}
The quadratic form induced by $H_{13824}$ on the rank $10$ lattice $\ZZ[\zeta]^5$ is proportional to the so-called laminated lattice $\Lambda_{10}$, see \cite{conway1993sphere}. We discovered this fact with the help of OEIS (\cite{oeis}, sequence A006909) by searching for the sequence of numbers of vectors of given norm, which begins as follows: $1, 0, 336, 768$. In fact, the matrix $H_{144}$ from Section \ref{sec:k1n4} in a similar way corresponds to the lattice $E_8$. The matrix $H_{16}$ from Section \ref{sec:k2n3} corresponds to the lattice $D_6$.
\end{rem}

\section*{Acknowledgements}
I would like to thank F. Catanese for giving a wonderful talk about Ueno-Campana varieties, and K. Oguiso for a stimulating discussion and encouragement. I thank G. Williamson for correcting the statement of Corollary.

Initial results of this work were obtained in 2014 during my stay at ICTP, Trieste. The work was completed in 2017 during my stay at IST Austria, where I was supported by the Advanced Grant ``Arithmetic and Physics of Higgs moduli spaces'' No. 320593 of the European Research Council.

\bibliographystyle{amsalpha}
\bibliography{refs}

\end{document}